\newtheorem{theorem}{Theorem}
\newtheorem{lemma}[theorem]{Lemma}
\newtheorem{corollary}[theorem]{Corollary}
\newtheorem{proposition}[theorem]{Proposition}
\theoremstyle{definition}
\newtheorem{remark}[theorem]{Remark}
\newtheorem{definition}[theorem]{Definition}
\newcounter{assumption}
\newcommand{\eref}[1]{(\ref{e.#1})}
\newcommand{\real}{\mathbb{R}}
\newcommand{\grad}{\nabla}
\newcommand{\e}{\varepsilon}
\newcommand{\Tr}{\textup{Tr}}
\newcommand{\Id}{\textup{Id}}
\newcommand{\indicator}{{\bf 1}}
\def\XXint#1#2#3{{\setbox0=\hbox{$#1{#2#3}{\int}$ }
\vcenter{\hbox{$#2#3$ }}\kern-.6\wd0}}
\DeclareMathOperator*{\argmin}{\arg\!\min}
\def\argmin{\mathop{\arg\,\min}\limits}%
\numberwithin{equation}{section}
\begin{document}
\title[Stability and Dynamic Stability of Serrin's Problem]{Stability of Serrin's Problem and Dynamic Stability of a Model for Contact Angle Motion}
\author{William M. Feldman}
\email{feldman@math.uchicago.edu}
\address{Department of Mathematics, The University of Chicago, Chicago, IL 60637, USA}
\thanks{W. M. Feldman partially supported by NSF-RTG grant DMS-1246999.}
\subjclass[2010]{35N25, 49Q10, 35R35}
\keywords{Free boundary problems, Dynamic stability, Gradient flows,  Symmetry~problems, Quantitative stability}
\maketitle
\begin{abstract}
We study the quantitative stability of Serrin's symmetry problem and it's connection with a dynamic model for contact angle motion of quasi-static capillary drops. We prove a new stability result which is both linear and depends only on a weak norm
\[ \big\||Du|^2- 1\big\|_{L^2(\partial \Omega)}. \]
This improvement is particularly important to us since the $L^2(\partial \Omega)$ norm squared of $|Du|^2-1$ is exactly the energy dissipation rate of the associated dynamic model.  Combining the energy estimate for the dynamic model with the new stability result for the equilibrium problem yields an exponential rate of convergence to the steady state for regular solutions of the contact angle motion problem.  As far as we are aware this is one of the first applications of a stability estimate for a geometric minimization problem to show dynamic stability of an associated gradient flow.
\end{abstract}
\section{Introduction}\label{sec: Droplet Problem}
We consider the solutions of the following free boundary problem, for $(x,t) \in \real^N \times [0,\infty)$,
\begin{equation}\label{e.dropletprob}\tag{\textup{P}}
\left\{
\begin{array}{lll}
-\Delta u(x,t) = \lambda(t) & \hbox{ in } & \Omega_t(u) = \{u(\cdot,t)>0\} \vspace{1.5mm}\\
\mathcal{V}_n = \tfrac{\partial_t u}{|Du|} = F(|Du|) & \hbox{ on } & \Gamma_t(u) = \partial \Omega_t(u),
\end{array}\right.
\end{equation}
where $\mathcal{V}_n$ means the normal velocity of $\Gamma_t(u)$ and $\lambda(t)$ is a Lagrange multiplier enforcing the volume constraint,
\[ \int u(\cdot,t) \ dx = \text{Vol} \ \hbox{ for all } \ t>0.\]
The above problem is formulated under the assumption that $\Omega_t(u)$ remains connected, in general each connected component should have its own Lagrange multiplier $\lambda(t)$ and the multipliers could change discontinuously at times when disjoint components merge.  

Problem \eref{dropletprob} can be posed entirely in terms of the domain $\Omega_t$.  Fixing the domain $\Omega_t = \Omega$ the solution $u_\Omega$ and the Lagrange multiplier $\lambda(\Omega)$ are determined by,
\begin{equation}\label{e.ulambdadef}
\begin{array}{c}
u_\Omega := \argmin \{ \int_\Omega |Dv|^2dx: v \in H^1_0(\Omega), \int_\Omega v \ dx = \text{Vol} \} \\ \\
\lambda(\Omega) := \min \{ \int_\Omega |Dv|^2dx: v \in H^1_0(\Omega), \int_\Omega v \ dx = \text{Vol} \}
\end{array}
\end{equation}
The energy $\lambda(\Omega)$, when $\textup{Vol} = 1$, is sometimes called the torsional rigidity of the domain $\Omega$. 

This problem is a quasi-static model for the contact angle driven motion of a liquid droplet on a solid surface.  The model is derived under the assumption of small volume or small magnitude $Du$ so that the surface area of the graph can be replaced by the Dirichlet energy.  The quasi-static assumption is that the time scale for the trend to equilibrium of the droplet profile $u$ is much smaller than the time scale for the motion of the contact line $\Gamma_t(u)$.  For more information about the derivation of the model see \cite{Greenspan:1976aa,greenspan}.  The problem can be exactly solved in $N=1$, we will take $N \geq 2$ from now on.  The physically relevant dimensions are $N=1,2$ and the problem can be exactly solved in $N=1$. We will take $N \geq 2$ from now on.

The function $F: [0,\infty) \to \real $ determines the normal velocity of the free boundary based on the contact angle of the graph $(x,u(x,t))$ with the surface $(x,0)$.  $F$ is always assumed to satisfy
\begin{equation}\label{e.F hyp}
 \hbox{$F$ is smooth, $F'>0$, and $F(1)=0$.}
\end{equation}
 The monotonicity implies that the problem \eref{dropletprob} has a formal comparison principle when $\lambda(t)$ is a given function of time, although we emphasize that there is not a comparison principle, at least in the standard sense, for the volume constrained problem.  With the condition $F(1)=0$, the free boundary problem \eref{dropletprob} can be thought of, formally, as a gradient flow for the energy,
\begin{equation}\label{e.Jdef}
\begin{array}{c}
\mathcal{J}(\Omega) := \int_\Omega |Du_{\Omega}|^2 dx + |\Omega|  = \lambda(\Omega) \textup{Vol} + |\Omega|
\end{array}
\end{equation}
in an appropriate metric on bounded subsets of $\real^N$.  

To motivate the gradient flow structure one can compute the following energy decay estimate for smooth solutions of \eref{dropletprob},
\[
\frac{d}{dt}\mathcal{J}(\Omega_t) = \int_{\Gamma_t} (1-|Du|^2)F(|Du|) \leq 0,
\]
in the case $F(|Du|) = |Du|^2-1$ this has a particularly appealing form,
\begin{equation}\label{e.dissipation}
 \frac{d}{dt}\mathcal{J}(\Omega_t) = -\int_{\Gamma_t} (|Du|^2-1)^2.
 \end{equation}
 See below in Section~\ref{sec: exponential rate} for the full computation.  Thus, at least at a formal level, solution of \eref{dropletprob} are driven by the energy dissipation to stationary solutions,
 \begin{equation}\label{e.serrin}\tag{S}
\left\{
\begin{array}{lll}
-\Delta u = \lambda(\Omega) & \hbox{ in } & \Omega(u), \vspace{1.5mm}\\
|Du| = 1 & \hbox{ on } & \Gamma(u), \vspace{1.5mm}\\
\int_{\Omega} u \ dx = \textup{Vol}
\end{array}\right.
\end{equation}
This overdetermined boundary value problem was originally studied by Serrin \cite{Serrin71}, and slightly afterwards with a different approach by Weinberger \cite{Weinberger:1971aa}.  They proved that when $\partial \Omega$ is $C^2$ then, modulo a translation,
\[ \Omega = B_{r_*}  \]
with
\[ r_*(\textup{Vol})^{N+1} = \omega_N^{-1}(N+2)\textup{Vol} \ \hbox{ and } \ \lambda(B_{r_*},\textup{Vol}) = N\left(\tfrac{\omega_N}{N+2}\right)^{\frac{1}{N+1}}\textup{Vol}^{-\frac{1}{N+1}}.\]
The goal of this paper is to prove a quantitative stability estimate for Serrin's symmetry result.  More precisely we would like to establish an estimate which controls the excess energy,
\[ \mathcal{J}(\Omega) - \mathcal{J}(B_{r_*}) \]
by the energy dissipation,
\[ \int_{\partial \Omega} (|Du|^2 - 1)^2 d \sigma .\]
The ideal scenario, from the perspective of the energy dissipation estimate, would be to establish the following inequality,
\begin{equation*}
 \mathcal{J}(\Omega) - \mathcal{J}(B_{r_*}) \leq C \int_{\partial \Omega} (|Du|^2 - 1)^2 d \sigma
 \end{equation*}
with the constant $C$ depending only on quantities which can be controlled by the available a-priori estimates for the flow \eref{dropletprob}.  Then the energy decay would yield,
\begin{equation*}
 \frac{d}{dt}[\mathcal{J}(\Omega_t) - \mathcal{J}(B_{r_*})]= -\int_{\Gamma_t} (|Du|^2-1)^2 \leq - C[\mathcal{J}(\Omega_t) - \mathcal{J}(B_{r_*})]
 \end{equation*}
establishing the expected exponential rate of convergence of the energy,
\[ \mathcal{J}(\Omega_t) - \mathcal{J}(B_{r_*}) \leq [\mathcal{J}(\Omega_0) - \mathcal{J}(B_{r_*})] e^{-Ct}.\]
This is the result we establish, conditional on $\Omega_t$ being a global in time regular solution.  Such regularity is expected to hold for solutions with initial data $\Omega_0$ close to $B_{r_*}$ in a Lipschitz distance.  The regularity hypothesis will be discussed further below in Section~\ref{sec: hypothesis}

\subsection{Stability results for the Serrin Problem}  First we will discuss the stability estimate of Serrin's symmetry problem \eref{serrin}.  Before describing the results of this paper we will discuss the existing literature on this problem.

As mentioned before the radial symmetry of solutions to \eref{serrin} was first proved by Serrin~\cite{Serrin71}.   His proof used the method of moving planes, used before by Alexandrov~\cite{Alexandrov} to show that constant mean curvature hypersurfaces in $\real^N$ are spheres.  Slightly afterwards Weinberger~\cite{Weinberger:1971aa} discovered a very short proof also based on a maximum principle type argument.  More recently there has been interest in the stability estimates of Serrin's problem.  Aftalion, Busca and Reichel~\cite{Aftalion:1999aa} made the moving planes method quantitative and proved a stability estimate in Hausdorff distance for $C^{2,\alpha}$ domains, basically they obtain a logarithmic stability estimate,
\[ R - r \leq C|\log\||Du| - 1\|_{C^1(\partial \Omega)}|^{-1/N},\]
where $r,R$ are respectively the in-radius and the out-radius of $\Omega$ and $u=u_\Omega$ is the torsion solution defined in \eref{ulambdadef}.  Since then the moving planes based method has been studied further by Ciraolo, Magnanini \cite{Ciraolo:2014aa} and Ciraolo, Magnanini and Vespri \cite{Ciraolo:2016aa} obtaining linear stability in terms of the Lipschitz semi-norm,
\[ R - r \leq C [|Du|]_{\partial \Omega} \ \hbox{ with } \ [f]_{\partial \Omega} = \sup_{\substack{x \neq y \\x,y \in \partial \Omega}}\frac{|f(x)-f(y)|}{|x-y|}\] 
for $C^{2,\alpha}$ domains.  A different approach to the symmetry problem, and to the stability estimates, was developed by Brandolini, Nitsch, Salani and Trombetti in several papers \cite{BNST08,BNST-alt,BNST09} where they also found applications of their method to symmetry problems for $k$-Hessian type equations.  The advantage of their method is that it relies more on integration by parts identities than the maximum principle and thus attains estimates in a weaker norm.  The result of \cite{BNST08} gives the following H\"{o}lder stability estimate which, at least in one direction, depends only on the $L^1(\partial \Omega)$ norm, suppose that,
\[\|Du\|_{C^0(\partial \Omega)} \leq 1+ \delta \ \hbox{ and } \ \||Du| - 1\|_{L^1(\partial \Omega)} \leq \delta |\partial \Omega|,\]
then there is a finite collection of balls $\{B_i\}_{i=1}^k$ such that,
\[ ||B_i| - |B_{r_{*,k}}|| \leq C \delta^{\beta} \ \hbox{ and } \ |\Omega \Delta \cup_i B_i | \leq C  \delta^{\beta} \ \hbox{ with } \ \beta = \tfrac{1}{4N+13}.\]
Note that with only the measure theoretic bound it is possible in $N\geq 3$ for $\Omega$ to be close to a union of finitely many balls of radius close to $r_{*,k} = r_*(\textup{Vol}/k)$ joined perhaps by long thin tentacles.  In $N=2$ this should also be possible, but only with adjacent balls connected by very short necks.  We will avoid both of these issues with our regularity assumption, anyway such configurations do not seem to be relevant to the study of the dynamic problem \eref{dropletprob}.  

Our contribution, which takes the approach of \cite{BNST08} as a starting point, is a stability estimate which depends only on a weak norm, the $L^2(\partial \Omega)$ norm of $|Du|^2-1$, and has linear order.  Our result is still perhaps not optimal in terms of the regularity assumption on $\partial \Omega$, but it is optimal in the sense that it allows to prove the exponential convergence rate for \eref{dropletprob}.  

Our result is also connected with a similar kind of stability estimate for hypersurfaces with almost constant mean curvature, i.e. a stability estimate associated with Alexandrov's \cite{Alexandrov} symmetry result for constant mean curvature hypersurfaces in $\real^N$.  In fact the connection between these two symmetry results is more than just an analogy, following the ideas of \cite{MR0474149}, Ros \cite{MR996826} was able to use Serrin's symmetry result to prove Alexandrov's theorem.  The stability problem for Alexandrov's theorem has been recently studied by Ciraolo and Maggi \cite{Ciraolo:2017aa}, and also by Krummel and Maggi \cite{MR3627438} where one of their results is an $L^2$ type bound similar to ours, under the assumption that $\partial \Omega$ can be written as a $C^{1,1}$ graph over the sphere.   In fact our result, since it gives control of the asymmetry in terms of the $L^2(\partial \Omega)$ norm of $|Du|^2-1$, could be used to prove a similar symmetry result for almost constant mean curvature hypersurfaces, this is already explained in quite a bit of detail in \cite{Ciraolo:2017aa} (see the introduction and Lemma $2.3$).  

We were made aware, after we completed this work, of a paper by Magnanini and Poggesi \cite{Magnanini:2016aa} studying the Alexandrov problem which used several similar ideas to our paper. Basically, our Proposition~\ref{prop: fund est} is close to their Theorem~$2.1$, but the methods need to diverge after that point since the estimate for Serrin's problem is in a weaker norm.

We mention one last connection, with the Faber-Krahn inequality which is typically stated for the first Dirichlet eigenvalue but also has a version for the torsional rigidity $\lambda(\Omega)$ (defined in \eref{ulambdadef}). In this case the Faber-Krahn inequality says,
\begin{equation}\label{e.FK}
\lambda(\Omega) \geq \lambda(B) \ \hbox{ for the ball $B$ with } \ |B| = |\Omega|.
\end{equation}
This follows from the fact that the Dirichlet energy is non-increasing with respect to Schwarz symmetrization.  We can derive immediately from the Faber-Krahn inequality that any minimizer of $\mathcal{J}$ over the class of open sets of $\real^N$ must be radially symmetric, then direct computation yields that $B_{r_*}$ is the only energy minimizer.  Thus the Faber-Krahn inequality, and its related stability results, have an important connection with the energy $\mathcal{J}$ and its gradient flows.  In a recent paper Brasco, De Philippis and Velichkov \cite{Brasco:2015aa} have proven an optimal stability result for the Faber-Krahn (as well as a range of related inequalities).  Basically, in our current understanding, the relationship between our result and the stability of the Faber-Krahn inequality is analogous to the relationship between the stability of almost constant mean curvature hypersurfaces \cite{Ciraolo:2017aa} and the stability of the isoperimetric inequality (Figalli, Maggi and Pratelli \cite{Figalli:2010aa}).  

The Faber-Krahn and Isoperimetric stability results are proving a lower bound 
\[E(\Omega) - E(B) \gtrsim d(\Omega,B)^2,\]
 where $E$ is the associated energy functional, Dirichlet or perimeter, $B$ is the energy minimizer and $d$ is an appropriate distance.  Results of the type considered in our paper or in \cite{Ciraolo:2017aa} are proving a lower bound of 
 \[ g_\Omega(\grad E(\Omega),\grad E(\Omega)) \gtrsim d(\Omega,B)^2\]
  where, formally, $g$ is a metric on a manifold of subsets of $\real^N$ associated to the distance $d$.  For our problem,
  \[ g_\Omega(f,g) = \int_{\partial \Omega} f g \ d \sigma.\]
  This interpretation suggests why this type of gradient stability estimate may require more regularity than energy stability estimates, and also explains the fundamental connection with gradient flows.  For smooth functions on finite dimensional spaces both of these estimates follow from a lower bound of $\grad ^2 E$, but as of yet we do not see how to manifest such a connection in our setting.

Now we make explicit the assumptions on the domain $\Omega \subset \real^N$ which we use for our result.  
\begin{enumerate}[{(a)}]
\item $\Omega$ is connected and has $C^2$ boundary. \label{a}\vspace{.5mm}
\item $\Omega$ has an interior ball of radius $\rho_0>0$ at every boundary point. \label{b}\vspace{.5mm}
\item $\Omega$ is a $L_0$-John domain, i.e. there is a base point $x_0 \in \Omega$ such that each point $x \in \Omega$ can be joined to $x_0$ by a curve $\gamma : [0,1] \to \Omega$ such that, \label{c}
\[ d(\gamma(t),\partial \Omega) \geq L_0^{-1} |\gamma(t)-x|.\]
\end{enumerate}
Assumption~\eqref{a} is qualitative, assumption~\eqref{b} implies quantitative non-degeneracy of $u$ near $\partial \Omega$ and assumption \eqref{c} is a quantification of connectivity and it is exactly what is needed to prove Poincar\'{e}-type inequalities in $\Omega$.  Instead of~\eqref{c} we could assume that $\Omega$ is a Lipschitz domain.  One thing we would like to emphasize is that, although we do require a certain amount of regularity, these assumptions do not place us in a perturbative regime.

 To simplify notation we will just take $r_0 = \rho_0$.
\begin{theorem}\label{thm: main stability}
Suppose that $\Omega$ satisfies Assumptions~\eqref{a}, \eqref{b}, and \eqref{c}. Then for some ball $B_{r_*}$ of radius $r_*$,
\begin{equation}\label{e.opt}
 \frac{|\Omega \Delta B_{r_*}|}{|B_{r_*}|} \leq C\left(N,L_0,\frac{\textup{diam}(\Omega)}{\rho_0},\frac{\textup{diam}(\Omega)}{r_*}\right)\left(\frac{1}{r_*^{N-1}} \int_{\partial \Omega} (|Du|^2 - 1)^2 d\sigma\right)^{1/2}.
 \end{equation}
\end{theorem}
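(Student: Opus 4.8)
The plan is to run the Weinberger/Brandolini--Nitsch--Salani--Trombetti scheme built around the \emph{Cauchy--Schwarz torsion deficit}
\[
\delta \;:=\; \int_\Omega u\,\Bigl(|D^2u|^2 - \tfrac{(\Delta u)^2}{N}\Bigr)\,dx \;=\; \int_\Omega u\,\bigl|D^2u + \tfrac{\lambda}{N}\Id\bigr|^2\,dx \;\ge\; 0 ,
\]
with $u = u_\Omega$, $\lambda = \lambda(\Omega)$, which vanishes precisely when $u$ is a radial paraboloid. After a dilation reducing to $r_* = 1$ I would assemble the standard identities --- $\int_\Omega|Du|^2 = \lambda\,\textup{Vol}$, the mass identity $\int_{\partial\Omega}|Du|\,d\sigma = N|\Omega|$, the Rellich--Pohozaev identity $\int_{\partial\Omega}|Du|^2\langle x,\nu\rangle\,d\sigma = \lambda(N+2)\textup{Vol}$ (translation-covariant modulo an error $\lesssim\|\,|Du|^2-1\|_{L^1(\partial\Omega)}$), and the subharmonicity $\Delta P = 2|D^2u+\tfrac{\lambda}{N}\Id|^2\ge0$ of $P := |Du|^2 + \tfrac{2\lambda}{N}u$, which forces $P\le\max_{\partial\Omega}|Du|^2$ on $\bar\Omega$. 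Assumption~\eqref{b} supplies the non-degeneracy $u(x)\gtrsim\mathrm{dist}(x,\partial\Omega)$ (interior-ball barrier from below) and, with $P$ and the comparison of $\Omega$ against its inner and outer balls (which bounds $\lambda$ and $u_{\max}$), the two-sided bound $c\le|Du|\le C$ on $\partial\Omega$; combined with the diameter bounds this makes every a priori quantity admissible, and a first pass through the identities plus the isoperimetric inequality pins $|\Omega|$ and $\lambda$ to within $\|\,|Du|^2-1\|_{L^2(\partial\Omega)}$ of their values on $B_1$ (so in particular $\lambda$ is close to $N$).

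The core is a \emph{fundamental identity} turning $\delta$ into boundary data. Writing the harmonic function $w := u + \tfrac{\lambda}{2N}|x-z|^2 - c$ (harmonic since $\Delta w = -\lambda+\lambda = 0$, with $D^2w = D^2u + \tfrac{\lambda}{N}\Id$ and hence $2|D^2w|^2 = \Delta(|Dw|^2)$), integration of $\Delta(|Dw|^2)$ against $u$ and the boundary relations $u = 0$, $\partial_\nu u = -|Du|$ give
\[
2\delta \;=\; -\lambda\!\int_\Omega |Dw|^2\,dx \;+\; \int_{\partial\Omega}|Dw|^2\,|Du|\,d\sigma ;
\]
expanding $|Dw|^2 = \bigl|-|Du|\nu + \tfrac{\lambda}{N}(x-z)\bigr|^2$ on $\partial\Omega$, inserting $\int_\Omega|Dw|^2 = \int_{\partial\Omega}w\,\partial_\nu w\,d\sigma$, and feeding in the Rellich--Pohozaev and mass identities, one chooses the free center $z$ and harmonic radius $a$ ($c = \tfrac{\lambda}{2N}a^2$) so that the surviving boundary density genuinely pairs with $(|Du|^2-1)$. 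The bookkeeping is designed to yield \emph{quadratic} control,
\[
\delta \;+\; \int_{\partial\Omega}|Dw|^2\,d\sigma \;\le\; C\int_{\partial\Omega}(|Du|^2-1)^2\,d\sigma ,
\]
rather than a merely linear bound (which, by the heuristics below, would only give a H\"older-order final estimate). This extraction of the square is where the weaker $L^2(\partial\Omega)$ norm makes the argument genuinely harder than the moving-planes approach and where it must diverge from Magnanini--Poggesi's treatment of Alexandrov's problem; I expect the delicate point to be closing the estimate for $\int_\Omega|Dw|^2$ --- naively only absorbable with a small constant --- which should force the careful choice of $z$ and $a$, perhaps bootstrapped off a crude preliminary closeness.

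It remains to convert the deficit into closeness to a ball. On $\partial\Omega$ one has $Dw = -|Du|\nu + \tfrac{\lambda}{N}(x-z)$, so $|Dw|^2 \ge \bigl(\tfrac{\lambda}{N}|x-z| - |Du|\bigr)^2$; since $\lambda\approx N$ and $|Du|\approx1$, the bound on $\int_{\partial\Omega}|Dw|^2$ yields $\int_{\partial\Omega}\bigl(|x-z|-1\bigr)^2\,d\sigma \lesssim \int_{\partial\Omega}(|Du|^2-1)^2\,d\sigma$. Because $\int_{\partial\Omega}\bigl|\,|x-z|-1\,\bigr|\,d\sigma \gtrsim |\Omega\,\Delta\,B_1(z)|$ (using assumption~\eqref{b} to control how $\partial\Omega$ sits relative to $\partial B_1(z)$), Cauchy--Schwarz gives
\[
\frac{|\Omega\,\Delta\,B_1(z)|}{|B_1|} \;\lesssim\; \Bigl(\int_{\partial\Omega}(|Du|^2-1)^2\,d\sigma\Bigr)^{1/2} ,
\]
which is \eqref{e.opt} after undoing the rescaling. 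Alternatively one may route through the weighted interior estimate $\|D^2w\|_{L^2(\Omega')}^2\lesssim\delta$ on $\Omega'\Subset\Omega$, propagated to $\partial\Omega$ by the $L_0$-John property~\eqref{c} and a trace inequality; this is needed if one wants the stronger conclusion that $u$ itself is close in $L^2(\Omega)$ to a paraboloid. The main obstacle throughout is the interplay, in the fundamental identity, between the sign-indefinite interior energy $\int_\Omega|Dw|^2$ and the boundary density: arranging $z$ and $a$ --- and, where necessary, a preliminary qualitative closeness --- so that all the ``principal'' terms cancel and what remains is a true square in $(|Du|^2-1)$, rather than a first-order residue that only Cauchy--Schwarz (with its loss of a square root) could absorb, together with the degenerate weight $u\sim\mathrm{dist}(\cdot,\partial\Omega)$ obstructing a direct passage from interior to boundary.
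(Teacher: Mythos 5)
Your architecture coincides with the paper's: the weighted Hessian deficit $\delta=\int_\Omega u\,|D^2u+\tfrac{\lambda}{N}\Id|^2\,dx$, an exact conversion of $\delta$ into a boundary pairing that is genuinely quadratic in $|Du|^2-1$, the trace identity $\int_{\partial\Omega}|Dw|^2|Du|\,d\sigma=2\delta+\lambda\int_\Omega|Dw|^2\,dx$ for the harmonic gradient $Dw$, Cauchy--Schwarz, and a boundary-to-measure lemma. Your P-function derivation (integrating $\Delta\bigl(|Du|^2+\tfrac{2\lambda}{N}u\bigr)=2|D^2u+\tfrac{\lambda}{N}\Id|^2$ against $u$ and feeding in Pohozaev and the mass identity) is an equivalent, arguably cleaner, route to the paper's Proposition~\ref{prop: fund est}, which is instead obtained there from the divergence structure of $S_2(D^2u)$ together with the new identity \eref{cube}; both give $2\delta=-\int_{\partial\Omega}\langle Dw,\nu\rangle(|Du|^2-1)\,d\sigma$, which is the quadratic pairing you are after.

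The genuine gap is the term $\lambda\int_\Omega|Dw|^2$. You correctly flag it as the delicate point, but the fix you propose --- a careful choice of $z$ and $a$, possibly bootstrapped off a crude preliminary closeness --- cannot work: no choice of center makes the unweighted interior Dirichlet energy of $Dw$ controlled by the degenerate-weight quantity $\delta=\int_\Omega u\,|D^2w|^2$ without a Poincar\'e inequality that tolerates the weight vanishing at $\partial\Omega$. The paper's mechanism is exactly assumption~\eqref{c}: Hurri-Syrj\"anen's weighted Poincar\'e inequality on $L_0$-John domains, $\|Dw\|_{L^{2N/(N-1)}(\Omega)}\le CL_0^N\|D^2w\|_{L^2_{d_\Omega}}$, applied to the harmonic function $Dw$ normalized by $Dw(x_0)=0$ at the John base point (harmonicity plus the mean value property standing in for the mean-zero normalization), combined with the Hopf barrier $u\gtrsim(\rho_0/\textup{diam}(\Omega))\,d_\Omega$ to pass between the weights $d_\Omega$ and $u$. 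You mention the John property and a trace route only as an optional alternative for a stronger conclusion; it is in fact the load-bearing step of the main argument. Two secondary loose ends: your lower bound $\int_{\partial\Omega}\bigl|\,|x-z|-1\,\bigr|\,d\sigma\gtrsim|\Omega\Delta B_1(z)|$ handles $|\Omega\setminus B_1|$ cleanly by co-area and the divergence theorem, but the inner part $|B_1\setminus\Omega|$ requires the in-radius bound and a separate harmonic level-set argument (the paper's Lemma~\ref{lem: distance est}); and the identification of the final radius $r_*$ versus $N/\lambda(\Omega)$ via the volume constraint is asserted rather than proved, whereas the paper needs the Dirichlet-energy decomposition \eref{dirichletsplit} for it.
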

See the beginning of Section~\ref{sec: stability} for an outline of the proof.  Further discussion about possible modifications of the assumptions can be found throughout Section~\ref{sec: stability} in the course of the proof.

\subsection{Linear stability of equilibria for the droplet model}\label{sec: hypothesis} Next we explain the application of Theorem~\ref{thm: main stability} to the dynamic stability of the droplet problem \eref{dropletprob}.

Before we describe our result we discuss the literature on \eref{dropletprob}.  For more details and literature see \cite{FKdrops}.  The quasi-static limit leading to problems of the form \eref{dropletprob} was first studied by Greenspan~\cite{Greenspan:1976aa,greenspan}.  Grunewald and Kim~\cite{GrunewaldKim11} construct global in time weak (energy) solutions from general initial data by a discrete gradient flow scheme.  The author and Kim \cite{FKdrops} constructed global in time time viscosity solutions which converge to the equilibrium (although without a rate) under a certain approximate reflection symmetry condition on the initial data (see Section~\ref{sec: exponential rate} where we recall the precise condition).  See also Glasner \cite{MR2144627,MR2221703,glasner}, Glasner, Kim \cite{GlasnerKim09} and Mellet \cite{MR3268920}.  A different approach was taken by Escher and Guidotti~\cite{Escher:2015aa} who show local existence of smooth solutions by writing the equation in a coordinate system adapted to the initial data.  Most relevant to our result, although by a completely different method, is the work of Guidotti~\cite{Guidotti:2015aa} showing the stability of the equilibrium state for \eref{dropletprob} by a perturbative approach.  

We also show the dynamic stability of the equilibrium state for \eref{dropletprob} with exponential convergence, but our method, which was outlined above, is completely different from \cite{Guidotti:2015aa}.  We use the energy dissipation estimate \eref{dissipation} in combination with our new stability result Theorem~\ref{thm: main stability} to obtain the exponential rate.  As far as we are aware this is one of the first applications of a stability estimate for a geometric minimization problem to show dynamic stability of an associated gradient flow.  The main benefits of our approach are $(1)$ it does not require to be in the perturbative regime, $(2)$ it has an appealing connection with the gradient flow structure of the problem, $(3)$ given the stability estimate Theorem~\ref{thm: main stability} and regularity theory for the flow the energy decay is just an elementary Gr\"{o}nwall argument.  We state our result below.

\begin{theorem}\label{thm: dynamic stability}
Suppose that the solution $\Omega_t$ of the droplet problem \eref{dropletprob} satisfies assumptions~\eqref{a}, \eqref{b}, and \eqref{c} uniformly for all $t>0$ and $\textup{diam}(\Omega)$ remains bounded. Then there are constants $C,c>0$ depending on $N$ and the suprema in $t$ of $L_0,\frac{\textup{diam}(\Omega_t)}{\rho_0}$, and $\frac{\textup{diam}(\Omega_t)}{r_*}$ such that,
\[  \inf_{x \in \real^N} |\Omega_t \Delta B_{r_*}(x)| \leq  C(\mathcal{J}(\Omega_0) - \mathcal{J}(B_{r_*}))e^{-cr_*^{-1}t}.\]   
\end{theorem}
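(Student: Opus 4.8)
The plan is to combine the energy dissipation identity \eref{dissipation} with the quantitative stability estimate of Theorem~\ref{thm: main stability} via a Gr\"onwall argument, and then upgrade the resulting decay of the energy excess to decay of the symmetric difference $|\Omega_t \Delta B_{r_*}(x)|$. First I would set $E(t) := \mathcal{J}(\Omega_t) - \mathcal{J}(B_{r_*})$, which by the Faber--Krahn type computation around \eref{FK} (or directly by the fact that $B_{r_*}$ minimizes $\mathcal{J}$ among open sets of the same volume, after adjusting for the volume constraint) is nonnegative. The dissipation identity gives, for smooth solutions, $E'(t) = -\int_{\Gamma_t}(|Du|^2-1)^2\,d\sigma$. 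The key point is that by Theorem~\ref{thm: main stability} applied to $\Omega = \Omega_t$ (legitimate precisely because the assumptions \eqref{a}, \eqref{b}, \eqref{c} hold uniformly in $t$ and $\mathrm{diam}(\Omega_t)$ stays bounded), the right-hand side dominates $E(t)$: one needs an inequality of the form
\[
E(t) \;\le\; C\, r_*^{?}\!\int_{\Gamma_t}(|Du|^2-1)^2\,d\sigma,
\]
with $C$ and the power of $r_*$ uniform in $t$. This does \emph{not} follow immediately from \eref{opt}, which only controls $|\Omega_t\Delta B_{r_*}|$ by the square root of the dissipation, so an intermediate lemma relating $E(t)$ to $|\Omega_t \Delta B_{r_*}|$ is required.

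The second ingredient is therefore a one-sided "energy vs.\ asymmetry" estimate: $E(t) \le C\, r_*^{-1} |\Omega_t \Delta B_{r_*}|^{\,2}/|B_{r_*}|$, or more honestly $E(t) \lesssim \big(|\Omega_t\Delta B_{r_*}|/|B_{r_*}|\big)^{?}$ with an exponent $\ge 1$. The upper bound on $\mathcal{J}(\Omega_t) - \mathcal{J}(B_{r_*})$ in terms of the symmetric difference should come from a direct estimate: using the ball $B_{r_*}(x)$ nearest to $\Omega_t$ as a competitor domain, compare torsional rigidities $\lambda(\Omega_t)$ and $\lambda(B_{r_*})$ and the volume terms, exploiting that the volume constraint forces $|\Omega_t|$ to be essentially $|B_{r_*}|$ so the discrepancy is genuinely quadratic-or-better in the symmetric difference. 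This is the step I expect to be the main obstacle, because getting the correct power of $r_*$ and a constant depending only on the allowed parameters (not on finer regularity of $\Omega_t$) requires care with scaling; a safe route is to rescale so that $r_* = 1$, prove the estimate there with constants depending on $N, L_0, \mathrm{diam}/\rho_0, \mathrm{diam}/r_*$, and then restore scaling.

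Granting the chain $E(t) \le C r_*^{\alpha}\big(-E'(t)\big)$ — obtained by composing the energy-vs-asymmetry bound with \eref{opt} and checking the $r_*$ powers match up to give $\alpha = 1$ — I would integrate: $E(t) \le E(0) e^{-c r_*^{-1} t}$. Finally, to convert this into the stated bound on $\inf_x|\Omega_t\Delta B_{r_*}(x)|$, I would invoke Theorem~\ref{thm: main stability} once more, now reading it as
\[
\inf_{x}\frac{|\Omega_t \Delta B_{r_*}(x)|}{|B_{r_*}|} \;\le\; C\Big(r_*^{-(N-1)}\!\!\int_{\Gamma_t}(|Du|^2-1)^2\Big)^{1/2} \;=\; C\big(r_*^{-(N-1)}(-E'(t))\big)^{1/2},
\]
and then bound $-E'(t)$ crudely: since $E$ is nonincreasing and nonnegative, for any $t$ one has $\int_t^{2t}(-E'(s))\,ds = E(t)-E(2t) \le E(t) \le E(0)e^{-cr_*^{-1}t}$, so there is some $s\in[t,2t]$ with $-E'(s) \le t^{-1}E(0)e^{-cr_*^{-1}t}$; combined with monotonicity of the symmetric difference in the sense provided by the flow (or simply re-running the argument at that time and using continuity) this yields exponential decay of the asymmetry, possibly after adjusting $c$. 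A cleaner alternative, if available from the regularity theory, is to note that $t\mapsto |\Omega_t \Delta B_{r_*}|$ is itself controlled by $E(t)$ through the energy-vs-asymmetry lemma run in the \emph{opposite} direction (asymmetry $\lesssim$ energy excess), which would give the result directly; I would use whichever of these the a priori estimates for \eref{dropletprob} make rigorous, absorbing all $t$-independent parameters into $C$ and $c$.
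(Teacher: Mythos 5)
Your plan hinges on an inequality that the paper explicitly identifies as unavailable: the upper bound $\mathcal{J}(\Omega_t)-\mathcal{J}(B_{r_*}) \lesssim \big(|\Omega_t\Delta B_{r_*}|/|B_{r_*}|\big)^2$. You correctly flag this ``energy vs.\ asymmetry'' lemma as the main obstacle, but it is not merely delicate --- it is the step the argument must avoid. With only assumptions \eqref{a}, \eqref{b}, \eqref{c}, the energy excess behaves like a squared \emph{smoothness} norm of the boundary perturbation (an $H^{1/2}$-type quantity for the torsion energy), which is not controlled by the squared $L^1$-size of the symmetric difference; your proposed competitor-domain comparison would at best give the volume term to \emph{linear} order in $|\Omega\Delta B_{r_*}|$, i.e.\ exponent $1$, which fed into Gr\"onwall yields only algebraic decay of $E(t)$, not exponential. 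Your closing ``cleaner alternative'' (use asymmetry $\lesssim$ energy excess to transfer decay from $E$ to the asymmetry) is circular as written, since it presupposes the exponential decay of $E(t)$ that the missing upper bound was supposed to deliver.

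The paper's resolution is to run Gr\"onwall on the squared asymmetry $\e(t) = \big(\inf_x|\Omega_t\Delta B_{r_*}(x)|/|B_{r_*}|\big)^2$ rather than on $E(t)$, using only inequalities in the available direction. Write the dissipation identity in integrated form, $E(t)=E(0)-\int_0^t\int_{\Gamma_s}(|Du|^2-1)^2\,d\sigma\,ds$. The sharp quantitative Faber--Krahn inequality of Brasco--De Philippis--Velichkov (Theorem~\ref{thm: faber-krahn}, Corollary~\ref{cor: fk cor}) gives the \emph{lower} bound $E(t)\geq c\,r_*^N\e(t)$, while Theorem~\ref{thm: main stability} gives $\int_{\Gamma_s}(|Du|^2-1)^2\,d\sigma\geq c\,r_*^{N-1}\e(s)$. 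Combining these yields $r_*^N\e(t)\leq CE(0)-c\,r_*^{N-1}\int_0^t\e(s)\,ds$, and the integral Gr\"onwall inequality gives $\e(t)\leq Cr_*^{-N}E(0)e^{-cr_*^{-1}t}$, which is the claim. Note that this route never needs $E\lesssim\e$, nor the mean-value extraction of a good time $s\in[t,2t]$, nor any monotonicity of $t\mapsto|\Omega_t\Delta B_{r_*}|$.
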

Since the convergence result is conditional on the propagation of assumptions~\eqref{a}, \eqref{b}, and \eqref{c} we must explain when this is expected to be true. This will be explained in greater detail in Section~\ref{sec: conditions} but we give a brief summary here.   The result of the author and Kim~\cite{FKdrops} implies that assumption \eqref{c} and the diameter bound will be propagated for initial data satisfying a certain strong star-shapedness type condition.  The regularity assumptions~\eqref{a} and \eqref{b} have not been studied yet for this problem, but initial data with small local Lipschitz constant is expected to regularize immediately, see Choi, Jerison and Kim \cite{Choi:2009aa,CJK} for a regularity result for a similar quasi-static problem.

\subsection{Notation} We will use $C,c>0$ to denote dimension dependent constants which may change from line to line.  If the constant has additional dependencies on some parameters $A_1,A_2,\dots$ we will write $C(A_1,A_2,\dots)$.  The only exception to this rule should be in heuristic remarks (outside of proofs) explaining ideas where we will not include all the dependencies of the constants.
\subsection{Acknowledgments}
I would like to thank Inwon Kim, Francesco Maggi, and John Garnett for inspiring discussions and helpful comments.

\section{Stability result for regular boundary}\label{sec: stability}
We explain the strategy to obtain the stability estimate of Theorem~\ref{thm: main stability}.  The starting point is the ideas of \cite{BNST08,BNST-alt,BNST09,BNST0} where it was discovered that the symmetry property of Serrin's problem is closely related to a certain arithmetic-geometric mean inequality.  We are able to improve on their calculation to obtain the following fundamental estimate,
\begin{equation*}
 \int_\Omega u \left( \left(\tfrac{\Delta u}{N}\right)^2-  \tfrac{1}{{ N \choose 2 }}S_2(D^2u)\right) dx \leq C_N\int_{\partial\Omega}\left| \tfrac{\lambda(\Omega)}{N}(x-x_0) +Du\right| \left||Du|^2-1\right| \ d \sigma. 
 \end{equation*}
 where $S_2$ is the second symmetric function of the eigenvalues and is defined below in Section~\ref{sec: symmetric}.  Via the AM-GM inequality this yields,
 \begin{equation}\label{e.fundest0}
 \int_\Omega u(x) |D^2u(x)+\tfrac{\lambda(\Omega)}{N}\Id|^2 dx \leq C_N\int_{\partial\Omega}\left| \tfrac{\lambda(\Omega)}{N}(x-x_0) +Du\right| \left||Du|^2-1\right| \ d \sigma.
 \end{equation}
 We will need to exploit this weighted Sobolev norm estimate to obtain symmetry.  We explain the idea of how to use \eref{fundest0}.  Let $x_0 \in \Omega(u)$ be a point where $\max u$ is obtained.  Define,
\begin{equation}\label{e.wdef1}
 w(x) = u(x) - (u(x_0) - \tfrac{\lambda(\Omega)}{2N}|x-x_0|^2) \ \hbox{ with } \ w(x_0) = 0, \ Dw(x_0) = 0,
 \end{equation}
and $w$ is harmonic.  Then we derive from \eref{fundest0},
\[  \int_\Omega u(x) |D^2w|^2 dx \leq \|Dw\|_{L^2(\partial \Omega)}\||Du|^2-1\|_{L^2(\partial \Omega)}.\]
Now one is tempted to use a trace theorem for the $L^2$ norm, weighted by $u(x)$, to bound,
\begin{equation}\label{e.trace thm 0}
\|Dw\|_{L^2(\partial \Omega)} \leq C \left( \int_\Omega u(x) |D^2w|^2 dx \right)^{1/2}.
\end{equation}
When $u$ is comparable to the distance function to $\real^N \setminus \Omega$ this estimate has the correct scaling, so perhaps it seems reasonable to expect.  Now in reality the estimate \eref{trace thm 0} is false in general, it is an endpoint case which fails for some logarithmically singular functions, however a very similar estimate \emph{does} hold for harmonic functions (like $w,Dw$) via a simple integration by parts identity, see Section~\ref{sec: trace thm}. Anyway, pretending we can use \eref{trace thm 0}, we would obtain,
\[ \int_\Omega u(x) |D^2w|^2 dx \leq C \int_{\partial \Omega} (|Du|^2-1)^2 \  d \sigma \ \hbox{ and } \ \int_{\partial \Omega} |Dw|^2 dx \leq C  \int_{\partial \Omega} (|Du|^2-1)^2 \ d \sigma.\]
From the second estimate we can easily derive,
\[\int_{\partial \Omega} (\tfrac{\lambda(\Omega)}{N}|x-x_0| - 1)^2 dx \leq C  \int_{\partial \Omega} (|Du|^2-1)^2 \ d \sigma,\]
which is now obviously a type of distance estimate to the ball $B_{\lambda(\Omega)/N}(x_0)$.  With some more work we can obtain the estimate in measure.  This is the basic outline of the proof.  
\subsection{Convexity inequalities and $k$-Hessian equations}\label{sec: symmetric} Just as the AM-GM inequality underlies the isoperimetric inequality and it's corresponding stability estimates, a closely related convexity inequality underlies the stability of Serrin's problem.  Let $M$ be an $N \times N$ symmetric matrix with real entries, and call the $N$ real eigenvalues of $M$, $\mu_1,\dots, \mu_N$.  The $k$-th symmetric function of the eigenvalues of $M$ is,
$$ S_k(M) = \tfrac{1}{{N \choose k }}\sum_{ i_1 < \cdots < i_k} \mu_{i_1}\cdots\mu_{i_k}.$$
When $k=1$ this is the trace, and when $k=N$ it is the determinant.  There is a classical refinement of the arithmetic geometric mean inequality which gives,
\begin{equation}\label{eqn: general amgm}
\det(M)^{1/N} = S_N(M)^{1/N} \leq \cdots \leq S_{2}(M)^{1/2} \leq S_1(M) = \tfrac{1}{N}\Tr(M),
\end{equation}
with strict inequality holding unless $\mu_1 = \cdots =\mu_N$.  

As was discovered by \cite{BNST08,BNST09,BNST-alt}, the symmetry of solutions of \eref{serrin} is fundamentally related to the AM-GM inequality between $\Tr$ and $S_2$.  We note that this inequality can be further quantified as:
\begin{lemma} \label{lem: quadratic growth}
There is a dimensional constant $c_N>0$ so that,
\[ S_1(M)^2-S_2(M)\geq c_N|M-S_1(M)\Id|^2.\]
\end{lemma}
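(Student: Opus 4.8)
The plan is to observe that this \emph{quantified} AM--GM inequality between $S_1$ and $S_2$ is in fact an identity, with sharp constant $c_N = \tfrac{1}{N(N-1)}$. Since all quantities appearing — $S_1(M)$, $S_2(M)$, and the matrix norm $|M - S_1(M)\Id|$ (the Hilbert--Schmidt / Frobenius norm) — are invariant under conjugation of $M$ by an orthogonal matrix, and $M$ is real symmetric, I would first reduce to a statement about the eigenvalues $\mu_1,\dots,\mu_N$ by passing to an orthonormal eigenbasis. In that basis $M - S_1(M)\Id$ is diagonal with entries $\mu_i - \bar\mu$, where $\bar\mu := S_1(M) = \tfrac1N\sum_i\mu_i$, so that
\[ |M - S_1(M)\Id|^2 = \sum_{i=1}^N (\mu_i - \bar\mu)^2 = \sum_{i=1}^N \mu_i^2 - \tfrac1N\Big(\sum_{i=1}^N\mu_i\Big)^2 . \]

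Next I would expand the left-hand side using the elementary identity $\sum_{i<j}\mu_i\mu_j = \tfrac12\big[(\sum_i\mu_i)^2 - \sum_i\mu_i^2\big]$, which gives $S_2(M) = \tfrac{1}{{N \choose 2}}\sum_{i<j}\mu_i\mu_j = \tfrac{1}{N(N-1)}\big[(\sum_i\mu_i)^2 - \sum_i\mu_i^2\big]$, together with $S_1(M)^2 = \tfrac{1}{N^2}(\sum_i\mu_i)^2$. Subtracting, the coefficient of $(\sum_i\mu_i)^2$ is $\tfrac{1}{N^2} - \tfrac{1}{N(N-1)} = -\tfrac{1}{N^2(N-1)}$, and collecting terms yields
\[ S_1(M)^2 - S_2(M) = \tfrac{1}{N^2(N-1)}\Big[N\sum_{i=1}^N\mu_i^2 - \Big(\sum_{i=1}^N\mu_i\Big)^2\Big] = \tfrac{1}{N(N-1)}\Big(\sum_{i=1}^N\mu_i^2 - \tfrac1N\Big(\sum_{i=1}^N\mu_i\Big)^2\Big). \]
Comparing this with the previous display establishes the lemma with $c_N = \tfrac{1}{N(N-1)}$, in fact as an equality.

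I do not expect any genuine obstacle: the argument is elementary linear algebra once one exploits orthogonal invariance to diagonalize. The only point that warrants care is the bookkeeping of which normalization of $S_k$ is in force (the paper's $S_k$ carries the factor $1/{N\choose k}$), and noting that the matrix norm here must be the Hilbert--Schmidt norm, equal to $(\sum_i\mu_i^2)^{1/2}$ on symmetric matrices — this is what makes the two displays coincide exactly rather than merely up to a dimensional factor. As a sanity check, the inequality degenerates to equality precisely when $\mu_1 = \cdots = \mu_N$, i.e. $M = S_1(M)\Id$, matching the equality case recorded after \eqref{eqn: general amgm}.
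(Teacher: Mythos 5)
Your proof is correct and is exactly the ``direct computation'' the paper invokes without writing out: diagonalizing by orthogonal invariance and expanding in the eigenvalues does give the identity $S_1(M)^2-S_2(M)=\tfrac{1}{N(N-1)}|M-S_1(M)\Id|^2$ for the Frobenius norm, so the lemma holds with the sharp constant $c_N=\tfrac{1}{N(N-1)}$ (and with any $c_N$ adjusted by a dimensional factor if another matrix norm is intended). Nothing further is needed.
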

\noindent This can be checked by direct computation.

We state several more useful facts about $S_2(M)$, proofs of these identities can be found in Reilly \cite{reilly1974}, see also Wang \cite{Wang:2009aa} where these $k$-Hessian equations are studied.  If $M$ has entries $m_{ij}$ then we call,
\[ S_2^{ij}(M) = \frac{\partial S_2(M)}{\partial m_{ij}}, \]
and since $S_2(M)$ is homogeneous of degree $2$ on $\real^{2N}$ we have the identity,
\[ S_2(M) = \tfrac{1}{2}S_2^{ij}(M)m_{ij}. \]
Now $S_2^{ij}$ is divergence free,
\[ D_{i}S_2^{ij}(D^2u) = 0,\]
and so $S_2(D^2u)$ can be written in divergence form,
\[ S_2(D^2u) = \tfrac{1}{2}D_i(S^{ij}_2(D^2u)D_ju).\]
Finally we have the following identity relating $S_2(D^2u)$ with the curvature of the level sets of $u$,
\begin{equation}
 \frac{S_2^{ij}(D^2u)D_iuD_ju}{|Du|^2} = \kappa |Du| \ \hbox{ where } \ \kappa = |Du|^{-1} \Tr((Id-\widehat{Du}\otimes\widehat{Du})D^2u). 
 \end{equation}

\subsection{Integration by parts identities} Before proceeding with the proofs we establish several integration by parts identities which will be useful later.  This first identity was also used in an important way in \cite{BNST08},
\begin{align*}
\int_{\Omega} |Du|^2 dx &= \int_\Omega |Du|^2\frac{-\Delta u}{\lambda(\Omega)} \ dx \\
&= \tfrac{2}{\lambda(\Omega)}\int_\Omega \langle D^2u Du,Du\rangle dx + \tfrac{1}{\lambda(\Omega)}\int_{\partial\Omega} |Du|^3 d\sigma(x) \\
& = \tfrac{2}{\lambda(\Omega)}\int_\Omega |Du|^2\Delta u - \Tr((Id-\widehat{Du}\otimes\widehat{Du})D^2u)|Du|^2 \ dx + \tfrac{1}{\lambda(\Omega)}\int_{\partial\Omega} |Du|^3 d\sigma(x) .
\end{align*}
Recalling from above that $\kappa = |Du|^{-1} \Tr((Id-\widehat{Du}\otimes\widehat{Du})D^2u)$ is $N-1$ times the mean curvature of the level sets of $u$ and rearranging above,
\begin{equation}\label{e.identity1}
 \int_\Omega \kappa |Du|^3 dx= -\tfrac{3\lambda(\Omega)^2}{2}\textup{Vol}+\tfrac{1}{2} \int_{\partial \Omega} |Du|^3 d\sigma(x). 
 \end{equation}
It turns out that the term $\int_{\partial \Omega} |Du|^3 d\sigma(x)$ in the above identity is not so convenient to work with by itself, we establish the following new identity,
\begin{lemma}\label{lem: cube}
For any $x_0 \in \real^N$,
\begin{equation}\label{e.cube}
 \int_{\partial\Omega} |Du|^3 d \sigma = \tfrac{\lambda(\Omega)^2(N+2)}{N}\textup{Vol} - \int_{\partial\Omega}\langle \tfrac{\lambda(\Omega)}{N}(x-x_0) +Du,\nu \rangle (|Du|^2-1) \ d \sigma.
 \end{equation}
\end{lemma}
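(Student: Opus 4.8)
The plan is to derive the identity by combining the standard Pohozaev/Rellich identity for $u$ (which produces $|Du|^2$ and $\langle x-x_0, Du\rangle$ terms on $\partial\Omega$) with the defining equations $-\Delta u = \lambda(\Omega)$ and $u = 0$ on $\partial\Omega$, and then converting the boundary integrand into the stated form using that $Du = |Du|\,\nu$ on $\partial\Omega$ (up to sign; since $u>0$ inside and $u=0$ on the boundary, $Du = -|Du|\nu$ with $\nu$ the outward normal, so $\langle Du,\nu\rangle = -|Du|$ and $|\langle Du,\nu\rangle| = |Du|$). The key observation is that all the terms one wants to appear, namely $\langle \tfrac{\lambda}{N}(x-x_0)+Du,\nu\rangle$ and $(|Du|^2-1)$, are naturally produced by a Pohozaev identity written with respect to the vector field $x - x_0$, once one uses $u\equiv 0$ on $\partial\Omega$ to kill tangential contributions.

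First I would compute $\int_\Omega \langle x-x_0, Du\rangle(-\Delta u)\,dx$ in two ways. On one hand it equals $\lambda(\Omega)\int_\Omega \langle x-x_0,Du\rangle\,dx = -N\lambda(\Omega)\int_\Omega u\,dx = -N\lambda(\Omega)\textup{Vol}$ after integrating by parts (using $\operatorname{div}(x-x_0) = N$ and $u=0$ on $\partial\Omega$). On the other hand, integrating by parts the Laplacian and using the classical Rellich identity
\[
\int_\Omega \langle x-x_0, Du\rangle \Delta u\,dx = \tfrac{N-2}{2}\int_\Omega |Du|^2\,dx - \tfrac12\int_{\partial\Omega}\langle x-x_0,\nu\rangle|Du|^2\,d\sigma + \int_{\partial\Omega}\langle x-x_0,Du\rangle\langle Du,\nu\rangle\,d\sigma,
\]
and then simplifying the boundary terms via $Du = \langle Du,\nu\rangle\nu$ on $\partial\Omega$ (so that $\langle x-x_0,Du\rangle\langle Du,\nu\rangle = \langle x-x_0,\nu\rangle|Du|^2$ and the two boundary integrals combine to $+\tfrac12\int_{\partial\Omega}\langle x-x_0,\nu\rangle|Du|^2\,d\sigma$). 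Also $\int_\Omega|Du|^2\,dx = \lambda(\Omega)\int_\Omega u\,dx = \lambda(\Omega)\textup{Vol}$. Equating the two expressions gives the Pohozaev identity
\[
\tfrac{N-2}{2}\lambda(\Omega)\textup{Vol} + \tfrac12\int_{\partial\Omega}\langle x-x_0,\nu\rangle|Du|^2\,d\sigma = N\lambda(\Omega)\textup{Vol},
\]
i.e. $\int_{\partial\Omega}\langle x-x_0,\nu\rangle|Du|^2\,d\sigma = (N+2)\lambda(\Omega)\textup{Vol}$.

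Next I would relate $\int_{\partial\Omega}|Du|^3\,d\sigma$ to $\int_{\partial\Omega}\langle x-x_0,\nu\rangle|Du|^2\,d\sigma$. Writing $|Du|^3 = |Du|^2\cdot|Du| = |Du|^2\cdot\big(-\langle Du,\nu\rangle\big)$ on $\partial\Omega$, one wants to express $-\langle Du,\nu\rangle$ as $\tfrac{\lambda(\Omega)}{N}\langle x-x_0,\nu\rangle$ plus an error controlled by $|Du|^2-1$. Precisely, $-\langle \tfrac{\lambda}{N}(x-x_0)+Du,\nu\rangle = -\tfrac{\lambda}{N}\langle x-x_0,\nu\rangle + |Du|$, so
\[
|Du|^3 = |Du|^2\Big(\tfrac{\lambda}{N}\langle x-x_0,\nu\rangle - \langle\tfrac{\lambda}{N}(x-x_0)+Du,\nu\rangle\Big).
\]
Integrating and using the Pohozaev identity for the $\tfrac{\lambda}{N}\langle x-x_0,\nu\rangle|Du|^2$ term, together with the algebraic rewriting $|Du|^2\langle v,\nu\rangle = \langle v,\nu\rangle + (|Du|^2-1)\langle v,\nu\rangle$ applied to $v = \tfrac{\lambda}{N}(x-x_0)+Du$ (whose constant part $\int_{\partial\Omega}\langle\tfrac{\lambda}{N}(x-x_0)+Du,\nu\rangle\,d\sigma$ can be evaluated by the divergence theorem: $\tfrac{\lambda}{N}\int_\Omega N\,dx + \int_\Omega\Delta u\,dx = \lambda|\Omega| - \lambda|\Omega| = 0$), the claimed identity should fall out after bookkeeping the constants to $\tfrac{\lambda(\Omega)^2(N+2)}{N}\textup{Vol}$.

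The main obstacle I anticipate is purely a matter of careful sign and constant tracking: getting the orientation of $\nu$ and the sign of $\langle Du,\nu\rangle$ consistent throughout, and making sure the Pohozaev boundary term is combined correctly so that the coefficient works out to exactly $\tfrac{(N+2)}{N}\lambda(\Omega)^2\textup{Vol}$ rather than some nearby constant. There is no serious analytic difficulty — the boundary is $C^2$ by Assumption \eqref{a}, so all integrations by parts are justified — it is entirely a bookkeeping exercise built on the Rellich--Pohozaev identity.
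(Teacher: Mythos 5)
Your proposal is correct and follows essentially the same route as the paper: a Pohozaev/Rellich identity for the vector field $x-x_0$ to evaluate $\int_{\partial\Omega}\langle x-x_0,\nu\rangle|Du|^2\,d\sigma$, followed by the splitting $|Du|^2 = 1 + (|Du|^2-1)$ and the divergence theorem applied to $\tfrac{\lambda(\Omega)}{N}(x-x_0)+Du$ (whose boundary flux vanishes since $-\Delta u=\lambda(\Omega)$). Your intermediate value $\int_{\partial\Omega}\langle x-x_0,\nu\rangle|Du|^2\,d\sigma=(N+2)\lambda(\Omega)\textup{Vol}$ is the correct one, consistent with the final identity; the paper's displayed intermediate constant $\tfrac{N+2}{2N}\lambda(\Omega)^2\textup{Vol}$ appears to be a typo for $\tfrac{N+2}{N}\lambda(\Omega)^2\textup{Vol}$.
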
  
Using this identity in \eref{identity1} gives,
\begin{equation}\label{e.identity2}
\int_\Omega \kappa |Du|^3 dx= -\tfrac{N-1}{N}\lambda(\Omega)^2\textup{Vol}-\tfrac{1}{2}  \int_{\partial\Omega}\langle \tfrac{\lambda(\Omega)}{N}(x-x_0) +Du,\nu \rangle (|Du|^2-1) \ d \sigma
\end{equation}
\begin{proof}[Proof of Lemma~\ref{lem: cube}]
We will need a Pohozaev type identity, for any $x_0 \in \real^N$,
\begin{align*}
 -\int_\Omega \langle x-x_0, D u \rangle \Delta u \ dx&=  \int_\Omega |D u|^2+\langle x-x_0,D^2u Du\rangle \ dx + \int_{\partial \Omega} \langle x-x_0, D u \rangle |Du| d \sigma \\
 & = \lambda(\Omega)\textup{Vol}+\int_\Omega \langle x-x_0,\tfrac{1}{2}D(|Du|^2)\rangle \ dx-\int_{\partial \Omega} \langle x-x_0, \nu \rangle |Du|^2 d \sigma \\
 &= -\frac{N-2}{2}\lambda(\Omega)\textup{Vol} -\tfrac{1}{2}\int_{\partial \Omega} \langle x-x_0, \nu \rangle |Du|^2 d \sigma
 \end{align*}
 and on the other hand by using the equation $-\Delta u = \lambda(\Omega)$ and then integrating by parts,
 \[ -\int_\Omega \langle x-x_0, D u \rangle \Delta u \ dx = -N\lambda(\Omega)\textup{Vol}.\]
 Combining these identities we find,
 \[\int_{\partial \Omega} \langle \tfrac{\lambda(\Omega)}{N}(x-x_0), \nu \rangle |Du|^2 d \sigma= \frac{N+2}{2N}\lambda(\Omega)^2\textup{Vol}.\]
Now we can compute the desired identity,
\begin{align*}
  \int_{\partial\Omega} |Du|^3 d \sigma &= -\int_{\partial\Omega} |Du|^2\langle Du,\nu \rangle \ d \sigma \\
  &= -\int_{\partial\Omega} (|Du|^2-1)\langle Du,\nu \rangle \ d \sigma - \int_{\partial\Omega} \langle Du,\nu \rangle \ d \sigma  \\
  &=\int_{\partial\Omega}\langle \tfrac{\lambda(\Omega)}{N}(x-x_0),\nu \rangle |Du|^2 \ d\sigma-\int_{\partial\Omega} (|Du|^2-1)\langle Du+\tfrac{\lambda(\Omega)}{N}(x-x_0),\nu \rangle \ d \sigma \\
  &\cdots+  \int_{\Omega} (-\Delta u) \ d \sigma -\int_{\partial \Omega}\langle \tfrac{\lambda(\Omega)}{N}(x-x_0),\nu \rangle \ d \sigma\\
  &=\tfrac{\lambda(\Omega)^2(N+2)}{N}\textup{Vol}-\int_{\partial\Omega} (|Du|^2-1)\langle Du+\tfrac{\lambda(\Omega)}{N}(x-x_0),\nu \rangle \ d \sigma
  \end{align*}
  where the last two terms were equal using $-\Delta u = \lambda(\Omega)$ in $\Omega$ and divergence theorem to evaluate $\int_{\partial \Omega}\langle \tfrac{\lambda(\Omega)}{N}(x-x_0),\nu \rangle \ d \sigma = \int_{\Omega} \lambda(\Omega) \ dx$.
\end{proof}

\subsection{A weighted $L^2$ estimate on the Hessian}\label{sec: hessian est} In this section we establish the fundamental estimate which, with sufficient boundary regularity, we will be able to exploit to obtain the stability result.  We state the result as a Proposition,
\begin{proposition}\label{prop: fund est}
For any $x_0 \in \real^N$,
\[ \int_\Omega u \left( \left(\tfrac{\Delta u}{N}\right)^2-  \tfrac{1}{{ N \choose 2 }}S_2(D^2u)\right) dx \leq C_N\int_{\partial\Omega}\left| \tfrac{\lambda(\Omega)}{N}(x-x_0) +Du\right| \left||Du|^2-1\right| \ d \sigma. \]
\end{proposition}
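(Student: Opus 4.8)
The plan is to turn the left‑hand side into a pure boundary integral by integration by parts, and then estimate that integral trivially by Cauchy--Schwarz; the only substantive input needed is the preparatory identity of Lemma~\ref{lem: cube}, which is already in hand.

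First, since $-\Delta u=\lambda(\Omega)$ is constant on $\Omega$, the first term is explicit, $\int_\Omega u\,\big(\tfrac{\Delta u}{N}\big)^2 dx=\tfrac{\lambda(\Omega)^2}{N^2}\textup{Vol}$. For the second term I would exploit the divergence structure recorded above: writing $S_2(D^2u)=\tfrac12 S_2^{ij}(D^2u)D_{ij}u$ and integrating by parts once against the weight $u$, the boundary term drops because $u=0$ on $\partial\Omega$ and the term containing $D_iS_2^{ij}(D^2u)$ drops because $S_2^{ij}$ is divergence free, so
\[ \int_\Omega u\, S_2(D^2u)\, dx = -\tfrac12\int_\Omega S_2^{ij}(D^2u)\,D_iu\,D_ju\, dx. \]
(One first works on $\{u>\e\}$ and lets $\e\to 0$; the $C^2$ regularity up to $\partial\Omega$ from Assumption~\eqref{a} makes this rigorous.) The level‑set curvature identity $S_2^{ij}(D^2u)D_iu\,D_ju=\kappa|Du|^3$ then rewrites the right‑hand side as $-\tfrac12\int_\Omega \kappa|Du|^3\,dx$.

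Next I would substitute identity \eref{identity2} -- itself obtained from the Pohozaev identity together with Lemma~\ref{lem: cube} -- to replace $\int_\Omega\kappa|Du|^3\,dx$ by $-\tfrac{N-1}{N}\lambda(\Omega)^2\textup{Vol}$ plus a boundary term carrying the defect factor $|Du|^2-1$. Combining this with the value of the first term, the bulk contributions proportional to $\lambda(\Omega)^2\textup{Vol}$ cancel exactly -- this cancellation is precisely what pins down the coefficient $\tfrac{1}{\binom{N}{2}}$ in front of $S_2$ -- and one is left with the \emph{identity}
\[ \int_\Omega u\Big(\big(\tfrac{\Delta u}{N}\big)^2 - \tfrac{1}{\binom{N}{2}}S_2(D^2u)\Big)\, dx = -c_N\int_{\partial\Omega}\big\langle \tfrac{\lambda(\Omega)}{N}(x-x_0)+Du,\,\nu\big\rangle\,(|Du|^2-1)\,d\sigma, \]
with $c_N=\tfrac{1}{2N(N-1)}$. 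The asserted estimate follows at once from the pointwise bound $\big|\big\langle \tfrac{\lambda(\Omega)}{N}(x-x_0)+Du,\nu\big\rangle\big|\le\big|\tfrac{\lambda(\Omega)}{N}(x-x_0)+Du\big|$ on $\partial\Omega$, giving $C_N=c_N$; note that the inequality enters only at this last step, everything before it being an equality.

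I do not expect a genuine obstacle in the chain above -- it is bookkeeping, controlled by the algebraic identities already established. The real content is hidden in Lemma~\ref{lem: cube}: a naive Pohozaev computation produces the unwieldy term $\int_{\partial\Omega}|Du|^3\,d\sigma$, which neither vanishes on the ball nor is manifestly of ``defect'' type. The device there is to write $|Du|^2=(|Du|^2-1)+1$ and use $\langle Du,\nu\rangle=-|Du|$ on $\partial\Omega$ to split off the part linear in the defect, then dispose of the leftover constant boundary term via $-\Delta u=\lambda(\Omega)$ and the divergence theorem. With that identity available, the Proposition is exactly the computation outlined in the preceding paragraph, and its one ``soft'' point is the $\e\to0$ limit legitimizing the interior integration by parts near the free boundary.
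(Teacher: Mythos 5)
Your proposal is correct and follows essentially the same route as the paper's proof: the same integration by parts against the weight $u$ using the divergence-free structure of $S_2^{ij}$, the same curvature identity reducing to $\int_\Omega \kappa|Du|^3$, and the same substitution of Lemma~\ref{lem: cube} (via \eref{identity2}) to produce the exact cancellation of the $\lambda(\Omega)^2\textup{Vol}$ terms, with the constant $\tfrac{1}{2N(N-1)}$ matching the paper's. Your observation that the result is in fact an identity until the final pointwise Cauchy--Schwarz bound is also consistent with the paper's computation, which only invokes the Newton inequality for the sign of the left-hand side.
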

Using Lemma~\ref{lem: quadratic growth} we also have the estimate,
\begin{equation}\label{e.fund est}
 \int_\Omega u(x) |D^2u(x)+\tfrac{\lambda(\Omega)}{N}\Id|^2 dx \leq C_N\int_{\partial\Omega}\left| \tfrac{\lambda(\Omega)}{N}(x-x_0) +Du\right| \left||Du|^2-1\right| \ d \sigma
 \end{equation} 

The main ideas of the computation have already been established in \cite{BNST08, BNST-alt}, however we have made an important improvement.  In the result of \cite{BNST08} (see \cite{BNST08} Lemma $3.1$) the analogous estimates had mismatched homogeneity, quadratic in $D^2u(x)+\tfrac{\lambda(\Omega)}{N}\Id$ and linear in $|Du|-1$.  Our estimate \eref{fund est} is quadratic on both the left hand side and the right hand side.

\begin{proof}[Proof of Proposition~\ref{prop: fund est}]
The beginning of the computation follows the idea of \cite{BNST08},
\begin{align*}
\textup{Vol}  = \int_\Omega \left( \lambda(\Omega)^{-1}\Delta u\right)^2 u  \ dx 
& \geq \tfrac{N^2}{\lambda(\Omega)^2}\int_\Omega  \frac{1}{{ N \choose 2 }}S_2(D^2u) u  \ dx \\
& = \tfrac{N}{(N-1)\lambda(\Omega)^2}\int_\Omega uS_2^{ij}(D^2u)D_{ij}^2u  \ dx \\
& = \tfrac{N}{(N-1)\lambda(\Omega)^2}\int_\Omega uD_i(S_2^{ij}(D^2u)D_{j}u) dx \\
& = -\tfrac{N}{(N-1)\lambda(\Omega)^2}\int_\Omega S_2^{ij}(D^2u)D_{i}uD_ju \ dx \\
& = -\tfrac{N}{(N-1)\lambda(\Omega)^2}\int_\Omega \kappa |Du|^3 \ dx  \\
& = \tfrac{3N}{2(N-1)}\textup{Vol} - \tfrac{N}{2(N-1)\lambda(\Omega)^2} \int_{\partial\Omega} |Du|^3 d \sigma(x).
\end{align*}
Here is where our computation diverges from that of \cite{BNST08}.  We insist on achieving an error estimate which is quadratic in $|Du|-1$ to match the quadratic homogeneity of $[{ N \choose 2 }^{-1}S_2(D^2u) - N^{-1}\Delta u]$.  Instead of adding and subtracting $1$ in last boundary integral above, which leads to a linear order error term $|Du|^3 - 1$, we use the identity \eref{cube} and find a quadratic term.  We finish the computation using \eref{cube},
\begin{align*}
\textup{Vol}  = \int_\Omega \left( \lambda(\Omega)^{-1}\Delta u\right)^2 u  \ dx 
& \geq \tfrac{N^2}{\lambda(\Omega)^2}\int_\Omega  \frac{1}{{ N \choose 2 }}S_2(D^2u) u  \ dx \\
& = \tfrac{3N}{2(N-1)}\textup{Vol} - \tfrac{N}{2(N-1)\lambda(\Omega)^2} \int_{\partial\Omega} |Du|^3 d \sigma(x) \\
&= \textup{Vol} + \tfrac{N}{2(N-1)\lambda(\Omega)^2} \int_{\partial\Omega}\langle \tfrac{\lambda(\Omega)}{N}(x-x_0) +Du,\nu \rangle (|Du|^2-1) \ d \sigma,
\end{align*}
where $x_0$ is an arbitrary point in $\real^N$.  Rearranging the above computation, what we have finally established is,
\[ \int_\Omega u \left( \left(\tfrac{\Delta u}{N}\right)^2-  \tfrac{1}{{ N \choose 2 }}S_2(D^2u)\right) dx \leq \tfrac{-1}{2N(N-1)}\int_{\partial\Omega}\langle \tfrac{\lambda(\Omega)}{N}(x-x_0) +Du,\nu \rangle (|Du|^2-1) \ d \sigma. \]
Finally this was the desired estimate of Proposition~\ref{prop: fund est}.
\end{proof}

\subsection{A weighted norm trace theorem for harmonic functions}\label{sec: trace thm}  In this section we make rigorous the trace theorem claimed in \eref{trace thm 0}.  For a given measurable weight function $\omega\geq 0$ we will write $L^2_\omega$ for the weighted $L^2$-space with norm,
\[ \|f\|_{L^2_w} = \int |f|^2 \omega(x) \ dx.\]
We are interested to prove the following type of estimate,
\[ \int_{\partial \Omega} |f-(f)_\Omega|^2 \ d\sigma \leq C \int_{\Omega} |Df|^2 u(x) \ dx.\]
To simplify matters we start by replacing the weight $u(x)$ by the distance function to $\real^N \setminus \Omega$ which we call $d_\Omega(x)$.  Now we consider the possible validity of the following trace estimate
\[ \int_{\partial \Omega} |f - (f)_{\partial \Omega}|^2 \ dx   \leq C \int_\Omega |Df|^2 d_\Omega(x) \ dx.\]
This estimate is an endpoint case and it is \emph{false} for general smooth $f$.  Take as an example $|\log d_\Omega(x)|^{\alpha}$ for any $0 < \alpha <1/2$, the gradient has finite $L^2_{d_\Omega}(\Omega)$ norm, but obviously there cannot be a trace on $\partial \Omega$.

\medskip

In order to obtain a valid estimate we will need to use that the particular $f$ we will be working with satisfies a stronger property, it is harmonic in $\Omega$.  We make use of the following identity.  Let $f$ be a harmonic function in $\Omega$ then,
\begin{align*}
 \int_{\Omega} |Df|^2u(x) \ dx &= -\int_{\Omega} f D f \cdot D u \ dx \\
 &= \int_{\Omega} f^2 \Delta u + f D f \cdot D u \ dx - \int_{\partial \Omega} f^2 Du \cdot \nu \ d\sigma.
 \end{align*}
 Using the middle equality we find,
 \[ -\int_{\Omega} f D f \cdot D u \ dx = \frac{1}{2} \int_{\Omega} f^2 \Delta u \ dx+\frac{1}{2}\int_{\partial \Omega} f^2  |Du|\ d\sigma.\]
 This can be rearranged to,
 \begin{equation}\label{e.traceid}
  \int_{\partial \Omega} f^2 |Du| \ d\sigma = 2\int_{\Omega} |Df|^2u(x) \ dx +\lambda(\Omega)\int_{\Omega} f^2  \ dx,
  \end{equation}
holding for every $f$ harmonic in $\Omega$ and continuous up to $\partial \Omega$.  

\medskip

Now we can state the main result of this section which, in particular, will apply to (a slight modification of) the function $Dw$ defined in \eref{wdef1} in Section~\ref{sec: hessian est}.

\begin{proposition}\label{prop: trace}
Suppose that $\Omega$ is an $L_0$-John domain with base point $x_0$ and $f$ is harmonic in $\Omega$, continuous up to $\partial \Omega$, with $f(x_0) = 0$.  Then we have the following estimates,
\[ \int_{\partial \Omega} f^2 |Du| \ d\sigma \leq 2 \int_{\Omega} |Df|^2u(x) \ dx + CL_0^N\lambda(\Omega)|\Omega|^{1/N}\int_{\Omega} |Df|^2d_\Omega(x) \ dx\]
\end{proposition}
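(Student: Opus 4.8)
The plan is to deduce the estimate from the integration-by-parts identity \eref{traceid}. Since $f$ is harmonic in $\Omega$ and continuous up to $\partial\Omega$, that identity gives
\[ \int_{\partial\Omega} f^2 |Du|\, d\sigma = 2\int_\Omega |Df|^2 u(x)\, dx + \lambda(\Omega)\int_\Omega f^2\, dx, \]
so, after pulling the scalar $\lambda(\Omega)$ out, Proposition~\ref{prop: trace} is equivalent to the weighted Poincar\'e inequality
\[ \int_\Omega f^2\, dx \le C\, L_0^N\, |\Omega|^{1/N}\int_\Omega |Df|^2\, d_\Omega(x)\, dx \]
for all $f$ continuous on $\overline\Omega$ with $f(x_0)=0$, where $x_0$ is the John center; harmonicity of $f$ then plays no further role. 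Dimensionally this is a ``fractional'' improved Poincar\'e inequality, sitting between the usual Poincar\'e inequality $\int_\Omega f^2 \lesssim \textup{diam}(\Omega)^2\int_\Omega |Df|^2$ and the improved Poincar\'e inequality $\int_\Omega f^2 \lesssim \int_\Omega |Df|^2 d_\Omega^2$, both of which are classical for John domains; I would prove it directly, in the spirit of this section.

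For the direct proof I would use the John curves. Fix $x\in\Omega$ and let $\gamma_x\colon[0,\ell_x]\to\Omega$ be the John curve from $x$ to $x_0$ parametrised by arclength; using the standard equivalence of the John condition with the cigar/carrot condition one may choose it rectifiable with $\ell_x\le C(N,L_0)\textup{diam}(\Omega)$ and satisfying $d_\Omega(\gamma_x(s))\ge L_0^{-1}s$ as well as the trivial bound $d_\Omega(\gamma_x(s))\ge d_\Omega(x)-s$ along it. Since $f(x_0)=0$, the fundamental theorem of calculus followed by Cauchy--Schwarz (splitting $1=d_\Omega^{1/2}\cdot d_\Omega^{-1/2}$) gives
\[ |f(x)|^2 \le \Big(\int_0^{\ell_x} |Df(\gamma_x(s))|^2\, d_\Omega(\gamma_x(s))\, ds\Big)\cdot\Big(\int_0^{\ell_x}\frac{ds}{d_\Omega(\gamma_x(s))}\Big), \]
and splitting the last integral at $s=d_\Omega(x)/2$ and using the two lower bounds for $d_\Omega(\gamma_x(s))$ shows it is at most $C L_0\log\!\big(C\,\textup{diam}(\Omega)/d_\Omega(x)\big)$. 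I would then integrate in $x$ over $\Omega$ and exchange the order of integration; this is where the John hypothesis really enters, since the family $\{\gamma_x\}_{x\in\Omega}$ must be chosen coherently --- for instance through a Whitney decomposition of $\Omega$ and the tree structure it inherits from the John condition --- so that the exchange is legitimate and the ``shadow'' $E_y=\{x:\ y\in\gamma_x\}$ of each point $y$ has volume controlled by the John constant. The outcome is a bound
\[ \int_\Omega f^2\, dx \le C L_0\int_\Omega |Df(y)|^2\, d_\Omega(y)\Big(\int_{E_y}\log\frac{C\,\textup{diam}(\Omega)}{d_\Omega(x)}\, dx\Big) dy, \]
and the John structure bounds the inner integral by $C(N,L_0)\,\textup{diam}(\Omega)$ (the logarithm contributing only a constant once integrated against the controlled shadow), giving $\int_\Omega f^2 \le C(N,L_0)\,\textup{diam}(\Omega)\int_\Omega|Df|^2 d_\Omega$. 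Finally, since a John domain contains the ball $B(x_0,d_\Omega(x_0))$ with $d_\Omega(x_0)\gtrsim L_0^{-1}\textup{diam}(\Omega)$, one has $\textup{diam}(\Omega)\le C(N) L_0 |\Omega|^{1/N}$; substituting this and tracking the powers of $L_0$ yields the weighted Poincar\'e inequality above, hence the proposition. An alternative to this whole second step is to quote a known weighted (fractional improved) Poincar\'e inequality for John domains, at the cost of then having to verify the precise form of the geometric constant.

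The step I expect to be the main obstacle is the exchange of the order of integration together with the borderline weight $1/d_\Omega$. The arclength integral of $d_\Omega^{-1}$ along $\gamma_x$ is exactly the endpoint case that is \emph{not} uniformly bounded: it produces the logarithmic factor $\log(\textup{diam}(\Omega)/d_\Omega(x))$, which reflects the number of dyadic scales between the scale of $x$ and that of the John center. Although a logarithm is still integrable, absorbing it correctly forces one to organise the John curves into a chain (Whitney/Boman) decomposition and to keep quantitative control of the overlap of the shadows with the right dependence on $L_0$; this geometric bookkeeping, rather than any single inequality, is the technical heart of the argument.
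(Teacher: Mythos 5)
Your first step coincides with the paper's: the identity \eref{traceid} reduces the proposition to bounding $\lambda(\Omega)\int_\Omega f^2\,dx$ by $CL_0^N\lambda(\Omega)|\Omega|^{1/N}\int_\Omega|Df|^2d_\Omega\,dx$. The gap is in your claim that ``harmonicity of $f$ then plays no further role'' and that the resulting weighted Poincar\'e inequality holds for all continuous $f$ with $f(x_0)=0$. That inequality is false with a pointwise normalization: since a point has zero $H^1$-capacity in dimension $N\ge 2$, one can take continuous (even Lipschitz) functions $f_\e$ with $f_\e(x_0)=0$, $f_\e\equiv 1$ outside a fixed small ball around $x_0$, interpolating logarithmically on an annulus $B_\delta\setminus B_\e(x_0)$, so that $\int_\Omega|Df_\e|^2d_\Omega\,dx\to 0$ while $\int_\Omega f_\e^2\,dx$ stays bounded below. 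This is not a bookkeeping issue in the Fubini step: it is precisely why the exchange of integration must fail, since every John curve funnels through a neighborhood of $x_0$, and the information ``$f(x_0)=0$'' is too weak to propagate outward. (The logarithmic factor $\log(\textup{diam}(\Omega)/d_\Omega(x))$ you single out as the main obstacle is in fact harmless.)

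The correct repair is exactly where the paper invokes harmonicity a second time: by the mean value property, $f(x_0)=0$ implies $(f)_{B_0}=0$ for a ball $B_0$ centered at $x_0$ with $B_0\subset\Omega$, which converts the pointwise normalization into a mean-zero normalization on a central ball of radius comparable to $\textup{diam}(\Omega)/L_0$. After that, your chaining/Whitney argument is a legitimate route to the needed inequality, because the chain argument naturally produces $\|f-(f)_{Q_0}\|$ rather than $\|f-f(x_0)\|$; the paper instead quotes the Hurri-Syrj\"anen weighted Sobolev--Poincar\'e inequality (Theorem~\ref{lem: dist weighted poincare}), in the form $\|f-(f)_{Q_0}\|_{L^{2N/(N-1)}(\Omega)}\le CL_0^N\|Df\|_{L^2_{d_\Omega}(\Omega)}$, and obtains the factor $|\Omega|^{1/N}$ by H\"older's inequality rather than from your bound $\textup{diam}(\Omega)\le CL_0|\Omega|^{1/N}$. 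So the architecture of your proof is salvageable, but as written the central reduction is to a false statement, and the use of the mean value property is a necessary, not optional, ingredient.
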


\medskip

Now all that we need to go from the trace identity \eref{traceid} to Proposition~\ref{prop: trace} is to control $\|f\|_{L^2(\Omega)}$ by the weighted $L^2_{u}$ norm of $Df$.  We will use a Poincar\'{e} inequality with weighted norm proved by Hurri-Syrj\"{a}nen \cite{Hurri-Syrjanen:1994aa}.  This is the only place where we use the assumption that $\Omega$ is a John domain, as it is exactly suited to this type of weighted Poicar\'{e} inequality.  We have a small wrinkle which is that $f$ does not have mean $0$ on $\Omega$, instead we need to use that $f(x_0) = 0$ and that $f$ is harmonic.  This will require us to state the result of \cite{Hurri-Syrjanen:1994aa} carefully.

The following result is not exactly what is stated in \cite{Hurri-Syrjanen:1994aa} but an inspection of the proof will see that this result is indeed proven there.
\begin{theorem}[Hurri-Syrj\"{a}nen]\label{lem: dist weighted poincare}
Let $\Omega$ a $L_0$-John domain in $\real^n$ with base point $x_0$ and $d_\Omega(x)$ be the distance function to $\real^N \setminus \Omega$.  There is a dimensional constant $C>0$ such that for any smooth $f: \Omega \to \real$,
\[ \| f - (f)_{Q_0}\|_{L^{\frac{2N}{N-1}}(\Omega)}   \leq CL_0^N\|Df\|_{L^2_{d_\Omega}(\Omega)},\]
where $Q_0$ is any cube centered at $x_0$ with $\textup{diam}(Q_0) \leq d(Q_0,\partial \Omega) \leq 4 \ \textup{diam}(Q_0)$.
\end{theorem}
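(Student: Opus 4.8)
The plan is to run the standard \emph{Boman chain} argument that underlies Poincar\'e-type inequalities on John domains, which is also the framework of Hurri-Syrj\"anen \cite{Hurri-Syrjanen:1994aa}. First I would fix a Whitney-type decomposition $\Omega = \bigcup_j Q_j$ into dyadic cubes with $\textup{diam}(Q_j) \le d(Q_j,\partial\Omega) \le 4\,\textup{diam}(Q_j)$ and bounded overlap, and take a fixed base cube $Q_{j_0}$ comparable to $Q_0$ (the averages over $Q_0$ and over $Q_{j_0}$ differ by a controlled amount, so this is harmless). The $L_0$-John condition then yields a \emph{chain condition}: for each $j$ there is a finite string of Whitney cubes $Q_{j_0} = R_0^j, R_1^j, \dots, R_{m_j}^j = Q_j$ with consecutive cubes overlapping and of comparable size, where the chain length $m_j$, the overlap multiplicities, and the size ratios are all controlled by $L_0$, and moreover the ``shadows'' $\bigcup_i R_i^j$ have bounded overlap in $j$. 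All of this is classical (Boman, Haj\l asz--Koskela, Chua); the point that requires attention is to keep every constant explicit as a power of $L_0$, and tracking how many Whitney cubes of a given scale can occur along the chains yields the stated power $L_0^N$.

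Second, I would record the pointwise telescoping estimate: for $x$ in the Whitney cube $Q_j$,
\[ |f(x) - (f)_{Q_0}| \ \le\ C|f(x) - (f)_{Q_j}| + C\sum_{i=0}^{m_j-1}|(f)_{R_i^j} - (f)_{R_{i+1}^j}| \ \le\ C\sum_{R \in \mathcal{C}(x)} \ell(R)\,\frac{1}{|R|}\int_R |Df| , \]
where $\mathcal{C}(x)$ denotes the chain of the cube containing $x$; here I only use the ordinary unweighted Poincar\'e inequality on each cube, together with the fact that $|(f)_{R_i^j}-(f)_{R_{i+1}^j}|$ is controlled by the average of $|f-(f)_{R_i^j}|$ over the overlap $R_i^j \cap R_{i+1}^j$. (In contrast to the trace identity \eref{traceid}, no harmonicity of $f$ is used here.) On each Whitney cube $\ell(R) \sim d_\Omega$, so Cauchy--Schwarz gives
\[ \ell(R)\,\frac{1}{|R|}\int_R |Df| \ \le\ C\,\ell(R)^{1-N/2}\Big(\int_R |Df|^2\Big)^{1/2} \ \sim\ \ell(R)^{(1-N)/2}\Big(\int_R |Df|^2\, d_\Omega\Big)^{1/2} . \]
Writing $a_R := \big(\int_R |Df|^2 d_\Omega\big)^{1/2}$, the right-hand side of the telescoping estimate is thus dominated by the discrete potential $\sum_{R \in \mathcal{C}(x)} \ell(R)^{(1-N)/2} a_R$, while $\sum_R a_R^2 \sim \|Df\|_{L^2_{d_\Omega}(\Omega)}^2$ by the bounded overlap of the $Q_j$.

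Third, I would raise this pointwise bound to the power $q = \tfrac{2N}{N-1}$, integrate over $\Omega = \bigcup_j Q_j$, and sum over the chains using the shadow/bounded-overlap property; this step is exactly Boman's lemma, the discrete Hardy-type inequality that converts a ``sum over the chain'' bound into the desired $L^q(\Omega)$ estimate. The exponent bookkeeping is forced by scaling: for affine $f$ on a ball of radius $R$ both sides of the claimed inequality are of size $\sim |Df|\,R^{(N+1)/2}$, and one checks that the single-cube contribution $\ell(R)^{N}\,\ell(R)^{q(1-N)/2}\,a_R^q$ has exponent $N + q(1-N)/2 = 0$ precisely when $q = \tfrac{2N}{N-1}$, so the summation closes at exactly this critical exponent with a constant polynomial in $L_0$; the dyadic counting from the first step produces the power $L_0^N$. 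Finally, replacing the customary mean $(f)_\Omega$ by the central-cube mean $(f)_{Q_0}$ is free, since the chain argument natively produces $(f)_{Q_0}$ (equivalently $|(f)_\Omega - (f)_{Q_0}|$ is itself dominated by the same potential). The one genuine obstacle is the quantitative bookkeeping in this last step --- making every constant in the chain summation explicit with the correct power of $L_0$ rather than leaving it implicit; since this is carried out, in its general $p$-version, in \cite{Hurri-Syrjanen:1994aa}, the remaining task is just to extract the stated special case with the explicit $L_0^N$ dependence.
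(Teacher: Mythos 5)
The paper gives no proof of this theorem beyond the remark that it follows from an inspection of the argument in Hurri-Syrj\"{a}nen \cite{Hurri-Syrjanen:1994aa}, and your sketch reconstructs exactly that argument: Whitney decomposition, John chains with $L_0$-controlled length and shadows, telescoping via the local Poincar\'{e} inequality, Cauchy--Schwarz to absorb the weight $d_\Omega\sim\ell(R)$ on Whitney cubes, and the Boman-type summation whose exponent bookkeeping closes precisely at $q=\tfrac{2N}{N-1}$, with the delicate endpoint summation deferred to the same reference. The proposal is therefore correct and takes essentially the same approach as the paper.
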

\noindent We remark that since we only need a bound for the $L^2(\Omega)$ norm in \eref{traceid} the assumptions, either on $\Omega$ or on the weight, can be weakened somewhat and we could still derive a version of Proposition~\ref{prop: trace}.

 Now since we have carefully stated Theorem~\ref{lem: dist weighted poincare} we can use the property that $f$ is harmonic with $f(x_0) = 0$ in place of having $(f)_\Omega = 0$.
\begin{lemma}\label{lem: poincare}
Let $\Omega$ a $L_0$-John domain in $\real^n$ with base point $x_0$ and $d_\Omega(x)$ be the distance function to $\real^N \setminus \Omega$. There is a dimensional constant $C>0$ such that if $f$ is a harmonic function in $\Omega$ with $f(x_0) = 0$ then,
\[ \| f \|_{L^{\frac{2N}{N-1}}(\Omega)}   \leq CL_0^N\|Df\|_{L^2_{d_\Omega}(\Omega)}.\]
\end{lemma}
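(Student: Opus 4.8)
The plan is to reduce to Theorem~\ref{lem: dist weighted poincare} by showing that the mean value $(f)_{Q_0}$ over the cube $Q_0$ centered at $x_0$ is controlled by the right-hand side. Writing $\|f\|_{L^{\frac{2N}{N-1}}(\Omega)} \leq \|f - (f)_{Q_0}\|_{L^{\frac{2N}{N-1}}(\Omega)} + |(f)_{Q_0}| \, |\Omega|^{\frac{N-1}{2N}}$, the first term is directly bounded by $CL_0^N \|Df\|_{L^2_{d_\Omega}(\Omega)}$ via Theorem~\ref{lem: dist weighted poincare}, so everything hinges on estimating $|(f)_{Q_0}|$.

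The key point is that $f$ is harmonic with $f(x_0) = 0$. By the mean value property, for the concentric cube or ball, $f(x_0)$ is an average of $f$ over $Q_0$ only approximately (it is an exact average over balls), so first I would compare $(f)_{Q_0}$ to $f(x_0) = 0$. Concretely, $|(f)_{Q_0}| = |(f)_{Q_0} - f(x_0)| = |(f - f(x_0))_{Q_0}| = |(f - (f)_{Q_0} + (f)_{Q_0} - f(x_0))_{Q_0}|$, which is circular; instead I would use a genuine Poincar\'e inequality on the \emph{cube} $Q_0$ (which is a nice Lipschitz domain sitting at controlled distance from $\partial\Omega$, so $d_\Omega \geq c\,\textup{diam}(Q_0)$ there): $|(f)_{Q_0} - f(x_0)| \leq C\,\textup{diam}(Q_0)\,\|Df\|_{L^\infty(\frac12 Q_0)}$ using interior gradient estimates for harmonic functions, or more cleanly $|(f)_{Q_0} - f(x_0)| \leq C \fint_{Q_0}|f - f(x_0)|$ is not quite it either. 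The cleanest route: since $d_\Omega \geq c_0\,\textup{diam}(Q_0) =: c_0 \ell$ on $Q_0$ and hence on $2Q_0 \subset \Omega$, a standard Poincar\'e inequality on $Q_0$ gives $\|f - (f)_{Q_0}\|_{L^2(Q_0)} \leq C\ell \|Df\|_{L^2(Q_0)} \leq C\ell (c_0\ell)^{-1/2}\|Df\|_{L^2_{d_\Omega}(Q_0)} = C\ell^{1/2}\|Df\|_{L^2_{d_\Omega}(\Omega)}$, and then $|f(x_0) - (f)_{Q_0}| \leq \fint_{Q_0}|f - (f)_{Q_0}| \leq |Q_0|^{-1/2}\|f-(f)_{Q_0}\|_{L^2(Q_0)} \leq C\ell^{-N/2}\ell^{1/2}\|Df\|_{L^2_{d_\Omega}(\Omega)}$. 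Since $f(x_0) = 0$ this bounds $|(f)_{Q_0}|$.

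Then $|(f)_{Q_0}|\,|\Omega|^{\frac{N-1}{2N}} \leq C\ell^{(1-N)/2}|\Omega|^{\frac{N-1}{2N}}\|Df\|_{L^2_{d_\Omega}(\Omega)}$, and since $Q_0 \subset \Omega$ forces $\ell \leq C|\Omega|^{1/N}$ so $\ell^{(1-N)/2}|\Omega|^{\frac{N-1}{2N}} \leq C$, and tracking the $L_0$-dependence of the admissible cube size through the John condition (one can choose $Q_0$ with $\ell \sim L_0^{-1}|\Omega|^{1/N}$ roughly, which only worsens constants by powers of $L_0$ already absorbed into $L_0^N$), we obtain $\|f\|_{L^{\frac{2N}{N-1}}(\Omega)} \leq CL_0^N\|Df\|_{L^2_{d_\Omega}(\Omega)}$. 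The main obstacle is the bookkeeping of how the cube radius $\ell$ depends on $\textup{diam}(\Omega)$ and $L_0$ — one must make sure that the geometric quantities appearing ($\ell$ versus $|\Omega|^{1/N}$) cancel to leave a scale-invariant inequality with only the stated $L_0^N$ factor, rather than introducing a spurious diameter dependence; this requires using the John condition to guarantee existence of a cube $Q_0$ of size comparable to $|\Omega|^{1/N}$ up to a factor depending only on $L_0$.
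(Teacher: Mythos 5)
Your argument is essentially the paper's: split off the Hurri--Syrj\"anen bound for $\|f-(f)_{Q_0}\|_{L^{2N/(N-1)}(\Omega)}$ and control the remaining constant $(f)_{Q_0}$ by combining harmonicity at $x_0$, an unweighted Poincar\'e inequality near $x_0$ where $d_\Omega \gtrsim \textup{diam}(Q_0)$, and the John-condition lower bound $\textup{diam}(Q_0)\gtrsim L_0^{-1}\textup{diam}(\Omega)$, so the proposal is correct in substance. Two small points: the paper cleans up the step you waffle over by working on the ball $B_0=B_{\textup{diam}(Q_0)/2}(x_0)\supset Q_0$, where the mean value property gives $(f)_{B_0}=f(x_0)=0$ exactly (your inequality $|f(x_0)-(f)_{Q_0}|\le \frac{1}{|Q_0|}\int_{Q_0}|f-(f)_{Q_0}|$ needs the mean value property on a ball inside $Q_0$ and a dimensional constant, since a point value is not bounded by a mean oscillation for general $f$); and your clause ``$\ell\le C|\Omega|^{1/N}$ so $\ell^{(1-N)/2}|\Omega|^{\frac{N-1}{2N}}\le C$'' has the implication backwards --- as you correctly say immediately afterwards, it is the \emph{lower} bound on $\ell$ coming from the John condition that closes the estimate.
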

From the Lemma and identity \eref{traceid}, Proposition~\ref{prop: trace} follows immediately.
\begin{proof}[Proof of Lemma~\ref{lem: poincare}]
  Call $B_0 = B_{\textup{diam}(Q_0)/2}(x_0)$.  Then by the assumption on $Q_0$,
\[ Q_0 \subset B_0 \subset \Omega.\]
 Since $f$ is harmonic and $f(x_0) = 0$ we have by the mean value property,
\[ (f)_{B_0}= 0.\]
The usual Poincar\'{e} inequality in $B_0$ implies then that,
\begin{align*}
 \| f \|_{L^{\frac{2N}{N-1}}(Q_0)} \leq \| f \|_{L^{\frac{2N}{N-1}}(B_0)}  &\leq C|Q_0|^{\frac{1}{N}+\frac{N-1}{2N}-\frac{1}{2}}\|Df\|_{L^2(B_0)} \\
 &\leq C|Q_0|^{\frac{1}{N}+\frac{N-1}{2N}-\frac{1}{2}-\frac{1}{2N}}\|Df\|_{L^2_{d_\Omega}(B_0)} \\
 &\leq C\|Df\|_{L^2_{d_\Omega}(\Omega)}.
  \end{align*}
  Now we apply the weighted Poincar\'{e} inequality Theorem~\ref{lem: dist weighted poincare} and the above estimate,
  \[ \| f\|_{L^{\frac{2N}{N-1}}(\Omega)} \leq \| f - (f)_{Q_0}\|_{L^{\frac{2N}{N-1}}(\Omega)} +  \left(|\Omega|/|Q_0|\right)^{\frac{N-1}{2N}}\| (f)_{Q_0} \|_{L^{\frac{2N}{N-1}}(Q_0)} \leq CL_0^N\|Df\|_{L^2_{d_\Omega}(\Omega)} \]
 which is the desired result.  Note we have used the John condition with base point $x_0$ and a point $x \in \Omega$ with $|x-x_0| \geq \textup{diam}(\Omega)/2$ to find that $|Q_0| \geq \textup{diam}(\Omega)^N/L_0^N$.
 \end{proof}

\subsection{An $L^2$ distance estimate on $\partial \Omega$} Now we are able apply the trace inequality Proposition~\ref{prop: trace} in combination with Proposition~\ref{prop: fund est} to obtain an $L^2$-type estimate on the distance of $\partial \Omega$ to $\partial B_{N/\lambda(\Omega)}$.
\begin{proposition}\label{prop: l2 dist est}
Suppose that $\Omega$ satisfies assumptions~\eqref{a}, \eqref{b} and \eqref{c}.  Then there exists $C>0$ depending on $N,L_0,\frac{\textup{diam}(\Omega)}{\rho_0}$ such that, 
\[\inf_{x_0 \in \real^N}\left( \int_{\partial \Omega} (\tfrac{\lambda(\Omega)}{N}|x-x_0| - 1)^2 \ d\sigma(x)\right)^{1/2} \leq C\left(\int_{\partial \Omega} (|Du|^2-1)^2 \ d \sigma(x)\right)^{1/2}.\]
\end{proposition}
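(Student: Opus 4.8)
The plan is to make rigorous the sketch given at the start of Section~\ref{sec: stability}, replacing the false trace inequality \eref{trace thm 0} by the valid Proposition~\ref{prop: trace}. Fix $x_0$ to be a point where $u$ attains its maximum on $\Omega$, and let $w$ be the harmonic function of \eref{wdef1}; then $w(x_0)=0$, $Dw(x_0)=0$, $D^2w=D^2u+\tfrac{\lambda(\Omega)}{N}\Id$, and $Dw=Du+\tfrac{\lambda(\Omega)}{N}(x-x_0)$, so that \eref{fund est} reads $\int_\Omega u\,|D^2w|^2\,dx\leq C_N\int_{\partial\Omega}|Dw|\,\big||Du|^2-1\big|\,d\sigma$. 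Before starting I would extract from the interior ball condition \eqref{b}, by comparing $u$ from inside with the explicit torsion function $\tfrac{\lambda(\Omega)}{2N}(\rho_0^2-|\cdot-y|^2)$ of the interior ball $B_{\rho_0}(y)$ touching $\partial\Omega$: the Hopf-type lower bound $|Du|\geq\tfrac{\lambda(\Omega)\rho_0}{N}$ on $\partial\Omega$, and (adding a crude interior lower bound for $u$, again from an interior ball) the pointwise comparison $d_\Omega(x)\leq C_N\tfrac{\textup{diam}(\Omega)}{\lambda(\Omega)\rho_0^2}\,u(x)$ on $\Omega$. I would also note that $x_0$ lies at distance at least $c_N\rho_0^2/\textup{diam}(\Omega)$ from $\partial\Omega$ (since $u$ is small near $\partial\Omega$ by \eqref{b} while $\max_\Omega u\geq c_N\lambda(\Omega)\rho_0^2$), so $\Omega$ is a John domain with base point $x_0$ and John constant controlled by $L_0$ and $\textup{diam}(\Omega)/\rho_0$; this is the ``slight modification'' needed to apply Proposition~\ref{prop: trace} with vanishing point $x_0$.

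The core is a short bootstrap on $\partial\Omega$. Applying Proposition~\ref{prop: trace} to each component $D_jw$ (harmonic, vanishing at $x_0$) and summing in $j$,
\[ \int_{\partial\Omega}|Dw|^2|Du|\,d\sigma\ \leq\ 2\int_\Omega|D^2w|^2u\,dx+CL_0^N\lambda(\Omega)|\Omega|^{1/N}\int_\Omega|D^2w|^2 d_\Omega\,dx. \]
On the left I insert $|Du|\geq\tfrac{\lambda(\Omega)\rho_0}{N}$; on the right I use $d_\Omega\leq C_N\tfrac{\textup{diam}(\Omega)}{\lambda(\Omega)\rho_0^2}u$ and $|\Omega|^{1/N}\leq C_N\textup{diam}(\Omega)$ to absorb the $d_\Omega$-weighted term into the $u$-weighted one — the factors $\lambda(\Omega)$ cancelling — to get
\[ \int_{\partial\Omega}|Dw|^2\,d\sigma\ \leq\ \frac{C}{\lambda(\Omega)\rho_0}\int_\Omega u\,|D^2w|^2\,dx,\qquad C=C\Big(N,L_0,\tfrac{\textup{diam}(\Omega)}{\rho_0}\Big). \]
Combining this with \eref{fund est} and Cauchy--Schwarz, $\int_\Omega u|D^2w|^2\leq C_N\big(\int_{\partial\Omega}|Dw|^2\big)^{1/2}\big(\int_{\partial\Omega}(|Du|^2-1)^2\big)^{1/2}$, and cancelling one factor of $\big(\int_{\partial\Omega}|Dw|^2\big)^{1/2}$ gives
\[ \int_{\partial\Omega}|Dw|^2\,d\sigma\ \leq\ \frac{C}{(\lambda(\Omega)\rho_0)^2}\int_{\partial\Omega}(|Du|^2-1)^2\,d\sigma. \]

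Finally I would convert this into the claimed distance estimate. Since $Du=-|Du|\nu$ on $\partial\Omega$ we have $\tfrac{\lambda(\Omega)}{N}(x-x_0)=Dw+|Du|\nu$, hence $\big|\tfrac{\lambda(\Omega)}{N}|x-x_0|-|Du|\big|\leq|Dw|$; with the triangle inequality and $(|Du|-1)^2\leq(|Du|^2-1)^2$ (valid as $|Du|\geq0$) this gives, pointwise on $\partial\Omega$,
\[ \Big(\tfrac{\lambda(\Omega)}{N}|x-x_0|-1\Big)^2\ \leq\ 2|Dw|^2+2(|Du|^2-1)^2. \]
Integrating and inserting the previous bound controls $\int_{\partial\Omega}\big(\tfrac{\lambda(\Omega)}{N}|x-x_0|-1\big)^2\,d\sigma$ by $C\big((\lambda(\Omega)\rho_0)^{-2}+1\big)\int_{\partial\Omega}(|Du|^2-1)^2\,d\sigma$; since the infimum in the statement only needs one admissible center, this $x_0$ suffices, provided $(\lambda(\Omega)\rho_0)^{-1}$ is bounded by the allowed quantities. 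That last point I would settle using domain monotonicity of the torsional multiplier, $\lambda(\Omega)\geq\lambda(B_{\textup{diam}(\Omega)})$, which bounds $(\lambda(\Omega)\rho_0)^{-1}$ in terms of $N$ and the geometric ratios (as in Theorem~\ref{thm: main stability}).

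The step I expect to be the real obstacle is reconciling the weights: Proposition~\ref{prop: trace} controls $\int_{\partial\Omega}|Dw|^2|Du|$ rather than $\int_{\partial\Omega}|Dw|^2$, and its error term carries the weight $d_\Omega$ rather than the weight $u$ of Proposition~\ref{prop: fund est}; bridging both gaps — and doing so with the factors $\lambda(\Omega)$ cancelling so that only a geometrically controlled constant survives — is exactly where assumption \eqref{b} enters, and is the one non-routine ingredient. Everything else is bookkeeping with constants.
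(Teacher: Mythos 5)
Your argument is, in all essentials, the paper's proof: the bootstrap combining Proposition~\ref{prop: trace} applied to the components of $Dw$, the lower bound $|Du|\gtrsim\lambda(\Omega)\rho_0$ on $\partial\Omega$ and the comparison $d_\Omega\lesssim u/(\lambda(\Omega)\rho_0)$ from the interior ball condition, estimate \eref{fund est}, Cauchy--Schwarz with cancellation of one factor of $\|Dw\|_{L^2(\partial\Omega)}$, and the reverse triangle inequality together with $||Du|-1|\leq||Du|^2-1|$ at the end are exactly the steps used here. There is, however, one genuine gap: the choice of center. You take $x_0$ to be a maximum point of $u$ and assert it lies at distance $\gtrsim\rho_0^2/\textup{diam}(\Omega)$ from $\partial\Omega$ ``since $u$ is small near $\partial\Omega$ by \eqref{b}.'' Assumption~\eqref{b} gives a \emph{lower} bound for $u$ near the boundary (via the interior barrier); an upper bound of the form $u\lesssim C\,d_\Omega$ requires exterior regularity of $\partial\Omega$, which is not assumed. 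The claim is in fact false under \eqref{a}, \eqref{b}, \eqref{c} alone: for $\Omega=B_1\setminus\overline{B_\delta(0)}$, which satisfies all three assumptions with constants uniform in small $\delta$, the torsion function is radial and attains its maximum on the sphere $|x|=r_\delta$ with $r_\delta\to0$ as $\delta\to0$ (in $N\geq3$, $r_\delta\sim\delta^{(N-2)/N}$; in $N=2$, $r_\delta\sim|\log\delta|^{-1/2}$), so the maximum point comes arbitrarily close to $\partial\Omega$. Such a point cannot serve as a John base point with controlled constant, and both Proposition~\ref{prop: trace} and Lemma~\ref{lem: poincare} (whose proof needs $|Q_0|\gtrsim\textup{diam}(\Omega)^N/L_0^N$) break down there.

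The repair is exactly the device used in the paper: keep the John base point $x_0$ from assumption~\eqref{c}, and rather than centering the paraboloid at a critical point of $u$, translate its vertex, setting $v(x)=a-\tfrac{\lambda(\Omega)}{2N}|x-x_1|^2$ with $x_1=x_0-\tfrac{N}{\lambda(\Omega)}Du(x_0)$ and $a=u(x_0)+\tfrac{N}{2\lambda(\Omega)}|Du(x_0)|^2$, so that $w=u-v$ still satisfies $w(x_0)=Dw(x_0)=0$ without requiring $Du(x_0)=0$. Then $Dw=Du+\tfrac{\lambda(\Omega)}{N}(x-x_1)$ and every remaining step of your argument goes through verbatim with $x_1$ as the center realizing the infimum. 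With that substitution your proof is correct and coincides with the one given here.
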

The proof is mainly carrying out the formal argument found in Section~\ref{sec: hessian est}, we use the interior ball condition, assumption~\eqref{b}, to apply the Hopf Lemma and obtain a lower bound 
\[u(x)  \geq c_N\frac{\rho_0}{\textup{diam}(\Omega)} d_\Omega(x)\]
 and then apply the trace inequality Proposition~\ref{prop: trace}.   At this stage we can also see some indication that the interior ball assumption (or a slight weakening) on $\partial \Omega$ is necessary.  When $\partial \Omega$ has only Lipschitz regularity $|Du|^{-1}$ will no longer be in $L^{\infty}(\partial \Omega)$ and $u(x)$ will no longer have a lower bound by the distance function.

\begin{proof}
The first thing we prove is the lower bound,
\begin{equation}\label{e.ulb}
 u(x) \geq c_N\textup{diam}(\Omega)^{-1} \rho_0 d_\Omega(x). 
 \end{equation}
Let $x_0 \in \Omega$ and call $y \in \partial \Omega$ to be the point achieving $d_\Omega(x_0) = |x_0-y|$.  There is a ball $B_{\rho_0}(z)$ touching $\partial \Omega$ from the inside at $y$.  Since $\partial \Omega$ is $C^1$ we must have,
\[ \frac{y-z}{|y-z|} = \frac{y-x_0}{|y-x_0|} = \nu(y) \ \hbox{ the inward unit normal vector to $\partial \Omega$ at $y$.}\]
We construct a barrier to get a lower bound on $u(z)$,
\[ \varphi(x) = \frac{\lambda(\Omega)}{2N}(\rho_0^2 - |x-z|^2).\]
By the maximum principle $u(x) \geq \varphi(x)$ and,
\[ u(x_0) \geq \varphi(x_0) \geq c_N \lambda(\Omega)\rho_0 (\rho_0 -|x_0-z|) = c_N \lambda(\Omega)\rho_0d_\Omega(x_0).\]
Finally by the monotonicity of $\lambda(\Omega)$,
\begin{equation}\label{e.llb}
 \lambda(\Omega) \geq \lambda(B_{\textup{diam}(\Omega)/2}) = c_N \textup{diam}(\Omega)^{-1},
 \end{equation}
and now we have \eref{ulb}.

Now we make rigorous the heuristic argument described at the beginning of the section.  Let $x_0$ be the base point from the John domain property, assumption~\eqref{c}.  Now define,
 \[ v(x) = a - \frac{\lambda(\Omega)}{2N}|x-x_1|^2 \ \hbox{ and } \ w = u - v\]
where 
\begin{equation}\label{e.pchoice}
 x_1 = x_0 - \tfrac{N}{\lambda(\Omega)}Du(x_0) \ \hbox{ and } \ a = u(x_0) + \tfrac{N}{2\lambda(\Omega)}|Du(x_0)|^2
 \end{equation}
 are chosen so that $w(x_0) = |Dw(x_0)| = 0$. Without loss we can assume $x_1 = 0$. Now $w$ as defined is harmonic in $\Omega$ and satisfies
\[ w(x_0) = |Dw(x_0)| = 0.\]
Now we can apply the weighted trace theorem Proposition~\ref{prop: trace} and \eref{ulb} to obtain,
\begin{align*}
 \int_{\partial \Omega} |Dw|^2 \ d\sigma &\leq C\frac{\textup{diam}(\Omega)}{\rho_0}\int_{\partial \Omega} |Dw|^2|Du| \ d\sigma \\
 &\leq C\frac{\textup{diam}(\Omega)}{\rho_0}\int_{\Omega} |D^2w|^2u(x) \ dx +CL_0^N\frac{\textup{diam}(\Omega)}{\rho_0}\int_{\Omega} |D^2w|^2d_\Omega(x) \ dx \\
 &\leq C \left(L_0,\frac{\textup{diam}(\Omega)}{\rho_0}\right)\int_{\Omega} |D^2w|^2u(x) \ dx.
 \end{align*}
Next we apply Proposition~\ref{prop: fund est} to find, dropping the dependencies of $C$,
\[ \int_{\partial \Omega} |Dw|^2 \ d\sigma \leq C  \int_{\partial \Omega} \left| \tfrac{\lambda(\Omega)}{N}(x-x_0) +Du\right| \left||Du|^2-1\right| \ d\sigma\]
Noting that $\frac{\lambda(\Omega)}{N}(x-x_0) +Du = Dw$ and applying Cauchy-Schwarz we get,
\[ \left(\int_{\partial \Omega} |Dw|^2 \ d\sigma\right)^{1/2} \leq C \left(\int_{\partial \Omega} (|Du|^2-1)^2 \ d\sigma\right)^{1/2}.\]
Now we are almost finished, we just need a triangle inequality,
\begin{align*}
 \|Dw\|_{L^2(\partial \Omega)} &\geq \left\|\tfrac{\lambda(\Omega)}{N}|x-x_0| - |Du|\right\|_{L^2(\partial \Omega)}  \\
 &\geq \left\|\tfrac{\lambda(\Omega)}{N}|x-x_0| - 1\right\|_{L^2(\partial \Omega)} - \big\||Du|-1\big\|_{L^2(\partial \Omega)},
 \end{align*}
 then we finish using $||Du| - 1| \leq ||Du|^2-1|$ since that inequality is true for all positive reals.
\end{proof}

\subsection{The distance estimate in measure}  
Now we are able to prove the stability estimate in measure, the main result of Theorem~\ref{thm: main stability}, which we restate here:
\begin{theorem}\label{thm: main stability 1}
If $\Omega$ satisfies assumptions~\eqref{a}, \eqref{b} and \eqref{c} then,
 \[ \inf_{x_0 \in \real^n}\frac{|\Omega \Delta B_{r_*}(x_0)| }{|B_{r_*}|} \leq C\left(L_0,\frac{\textup{diam}(\Omega)}{\rho_0},\frac{\textup{diam}(\Omega)}{r_*}\right)\left(\frac{1}{r_*^{N-1}}\int_{\partial \Omega}(|Du|^2-1)^2 \ d \sigma \right)^{1/2}.\]
\end{theorem}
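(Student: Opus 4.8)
Write $\delta^{2}:=\tfrac{1}{r_*^{N-1}}\int_{\partial\Omega}(|Du|^{2}-1)^{2}\,d\sigma$. Since $|\Omega|\le \omega_N\,\textup{diam}(\Omega)^{N}$ we have $\tfrac{|\Omega\Delta B_{r_*}|}{|B_{r_*}|}\le 1+(\textup{diam}(\Omega)/r_*)^{N}$, so the estimate is automatic unless $\delta<\e_{0}$ for a small $\e_{0}$ depending only on the admissible parameters; hence assume $\delta<\e_{0}$. Next I would record the routine a priori bounds: from \eref{llb}, the interior ball condition (a Vitali cover of $\partial\Omega$ by balls of radius $\lesssim\rho_0$ gives $|\partial\Omega|\lesssim|\Omega|/\rho_0$), the lower bound \eref{ulb}, and the definition of $r_*$ through $\textup{Vol}=\int_\Omega u$, that $c\,r_*\le R:=N/\lambda(\Omega)\le C\,r_*$, $c\,r_*^{N-1}\le|\partial\Omega|\le C\,r_*^{N-1}$, $c\,r_*^{N+1}\le\textup{Vol}\le C\,r_*^{N+1}$, with $c,C$ depending only on the admissible parameters. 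Normalizing the optimal center in Proposition~\ref{prop: l2 dist est} to be the origin, that estimate reads $\int_{\partial\Omega}(|x|-R)^{2}\,d\sigma\le C R^{2}r_*^{N-1}\delta^{2}$, so by Cauchy--Schwarz
\[
I:=\int_{\partial\Omega}\big|\,|x|-R\,\big|\,d\sigma\le|\partial\Omega|^{1/2}\Big(\int_{\partial\Omega}(|x|-R)^{2}\,d\sigma\Big)^{1/2}\le C\,r_*^{N}\delta .
\]

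\textbf{Geometric containment (the main obstacle).} The crux is to show that for $\delta$ small, $B_{(1-\e_N)R}\subseteq\Omega\subseteq B_{(1+\e_N)R}$ with $\e_N:=\tfrac{1}{2N-1}$. For the pointwise inclusion $\partial\Omega\subseteq\{\,\big|\,|x|-R\,\big|\le\e_N R\,\}$: if some $p\in\partial\Omega$ violated it, the interior ball condition would give $\mathcal H^{N-1}(\partial\Omega\cap B_s(p))\gtrsim s^{N-1}$ for $s$ of order $\min(\rho_0,R)$, on which $\big|\,|x|-R\,\big|\gtrsim R$, contradicting $\int_{\partial\Omega}(|x|-R)^{2}\le C R^{2}r_*^{N-1}\delta^{2}$ once $\delta<\e_0$. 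Granting this, the connectedness of $\Omega$ (the standard dichotomy: $\partial\Omega\cap B_{(1-\e_N)R}=\varnothing$ forces $\Omega\cap B_{(1-\e_N)R}\in\{\varnothing,B_{(1-\e_N)R}\}$, while $\Omega$ bounded forces $\Omega\subseteq\bar B_{(1+\e_N)R}$) leaves only the alternative that $\Omega$ is confined to the narrow annulus $\{(1-\e_N)R\le|x|\le(1+\e_N)R\}$; this I would exclude by noting that a domain trapped in an annulus of relative width $\e_N$ cannot have $|Du|$ close to $1$ on a definite fraction of its boundary (a one–dimensional barrier comparison across the width forces $|Du|\lesssim N\e_N\approx\tfrac12$ on the part of $\partial\Omega$ near the bounding spheres), again contradicting $\delta$ small, so in fact $B_{(1-\e_N)R}\subseteq\Omega$. \emph{I expect this containment step to be the hard part}, both the annulus exclusion and the bookkeeping that every constant depends only on the admissible parameters.

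\textbf{Measure estimate against $B_{R}$.} With the containment in hand this is two applications of the divergence theorem with radial fields vanishing on $\partial B_{R}$. On $\Omega\setminus\bar B_{R}\subseteq\{|x|>R\}$ take $V(x)=x-R\tfrac{x}{|x|}$; then $\textup{div}\,V=1-\tfrac{(N-1)(R-|x|)}{|x|}\ge1$ there, $V=0$ on $\partial B_{R}$, and $|V|=\big|\,|x|-R\,\big|$ on $\partial\Omega$, so $|\Omega\setminus B_{R}|\le\int_{\Omega\setminus\bar B_{R}}\textup{div}\,V=\int_{\partial\Omega\cap\{|x|>R\}}\langle V,\nu\rangle\le I$. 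On $B_{R}\setminus\bar\Omega\subseteq\{(1-\e_N)R<|x|<R\}$ (here I use the containment) take $W(x)=(R-|x|)\tfrac{x}{|x|}$; the choice $\e_N=\tfrac{1}{2N-1}$ makes $\textup{div}\,W=-1+\tfrac{(N-1)(R-|x|)}{|x|}\le-\tfrac12$ on that annulus, $W=0$ on $\partial B_{R}$, and $|W|\le\big|\,|x|-R\,\big|$ on $\partial\Omega$, so $\tfrac12|B_{R}\setminus\Omega|\le-\int_{B_{R}\setminus\bar\Omega}\textup{div}\,W=\pm\int_{\partial\Omega\cap\{|x|<R\}}\langle W,\nu\rangle\le I$. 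Hence $|\Omega\Delta B_{R}|\le 3I\le C\,r_*^{N}\delta$.

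\textbf{Passing from $B_{R}$ to $B_{r_*}$ and conclusion.} The Pohozaev-type identity derived in the proof of Lemma~\ref{lem: cube}, $\int_{\partial\Omega}\langle\tfrac{\lambda(\Omega)}{N}(x-x_{1}),\nu\rangle|Du|^{2}\,d\sigma=c_{N}\lambda(\Omega)^{2}\textup{Vol}$ for any $x_{1}\in\real^{N}$, gives (choosing $x_{1}\in\Omega$, using $\int_{\partial\Omega}\langle x-x_{1},\nu\rangle\,d\sigma=N|\Omega|$ and estimating the term $\tfrac{\lambda}{N}\int_{\partial\Omega}\langle x-x_1,\nu\rangle(|Du|^{2}-1)$ by $C\lambda\,\textup{diam}(\Omega)\,|\partial\Omega|^{1/2}\|\,|Du|^{2}-1\|_{L^{2}(\partial\Omega)}\le C r_*^{N-1}\delta$) that $|\Omega|=c_{N}\lambda(\Omega)\textup{Vol}+O(r_*^{N}\delta)$; applied to $B_{r_*}$, where $|Du|\equiv1$ on the boundary, it gives $|B_{r_*}|=c_{N}\lambda(B_{r_*})\textup{Vol}$ with the \emph{same} $\textup{Vol}$. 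Subtracting and using $\lambda(\Omega)=N/R$, $\lambda(B_{r_*})=N/r_*$,
\[
|\Omega|-|B_{r_*}|=c_{N}N\,\textup{Vol}\Big(\tfrac1R-\tfrac1{r_*}\Big)+O(r_*^{N}\delta),
\]
while also $|\Omega|-|B_{r_*}|=(|\Omega|-|B_{R}|)+\omega_N(R^{N}-r_*^{N})=O(r_*^{N}\delta)+\omega_N(R^{N}-r_*^{N})$ by the previous step. Equating and writing $t=R-r_*$, the coefficient $\tfrac{c_{N}N\,\textup{Vol}}{R r_*}+\omega_N\sum_{k=0}^{N-1}R^{k}r_*^{N-1-k}$ of $t$ is bounded below by the dimensional quantity $\omega_N r_*^{N-1}$, hence $|R-r_*|\le C r_*\delta$. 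Finally
\[
|\Omega\Delta B_{r_*}|\le|\Omega\Delta B_{R}|+|B_{R}\Delta B_{r_*}|\le C r_*^{N}\delta+\omega_N|R^{N}-r_*^{N}|\le C r_*^{N}\delta,
\]
and dividing by $|B_{r_*}|=\omega_N r_*^{N}$ yields exactly $\tfrac{|\Omega\Delta B_{r_*}|}{|B_{r_*}|}\le C\big(\tfrac1{r_*^{N-1}}\int_{\partial\Omega}(|Du|^{2}-1)^{2}\,d\sigma\big)^{1/2}$.
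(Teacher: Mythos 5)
Your overall architecture (Proposition~\ref{prop: l2 dist est}, then a measure estimate against $B_{R}$ with $R=N/\lambda(\Omega)$, then a comparison of $R$ with $r_*$) parallels the paper's, and two ingredients are sound: the outer bound $|\Omega\setminus B_R|\le\int_{\partial\Omega}\big||x|-R\big|\,d\sigma$ via the field $x-Rx/|x|$ is essentially the paper's coarea argument in Lemma~\ref{lem: distance est}, and your Pohozaev route to $|R-r_*|\le Cr_*\delta$ is a genuinely different and arguably cleaner alternative to the paper's decomposition of $\lambda(\Omega)\textup{Vol}=\int_\Omega|Du|^2$ (which needs Lemma~\ref{lem: poincare} and Proposition~\ref{prop: fund est} to control cross terms). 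Two small cautions there: the constant in the intermediate Pohozaev display should be $\tfrac{N+2}{N}\lambda^2\textup{Vol}$, consistent with \eref{cube} applied to $B_{r_*}$ (your use of $c_N$ sidesteps this), and the bound $|\partial\Omega|\lesssim|\Omega|/\rho_0$ is true but does not follow from a Vitali cover alone; it needs the inward normal map together with the one-sided curvature bound coming from the interior ball condition.

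The genuine gap is the containment step, and it is not merely "hard": it is false under the stated hypotheses. Take $\Omega=B_1(0)\setminus\overline{B_\eta(y)}$ with $|y|\le 1/2$ and $\eta$ tiny. This is connected, $C^2$, uniformly John, and has an interior ball of radius $\rho_0=1/4$ at every boundary point (at points of $\partial B_\eta(y)$ the interior ball is externally tangent to the small sphere). Yet for $p\in\partial B_\eta(y)$ and $\eta\ll s\le\rho_0$ one has $\mathcal{H}^{N-1}(\partial\Omega\cap B_s(p))=N\omega_N\eta^{N-1}\ll s^{N-1}$: the interior ball condition gives interior volume density and a one-sided curvature bound, but no lower bound on boundary surface density, because small interior holes defeat it. Consequently $\partial\Omega\not\subset\{\big||x|-R\big|\le\e_N R\}$ and $B_{(1-\e_N)R}\not\subset\Omega$ in general, and your vector-field computation for $|B_R\setminus\Omega|$ collapses: on a hole deep inside $B_R$ the quantity $\textup{div}\,W=-1+(N-1)(R-|x|)/|x|$ is large and positive, so that region is not controlled. (Such domains are harmless for the theorem itself, since the hole has tiny measure and its boundary carries a tiny weighted surface integral, but your proof cannot see this.) This is exactly what the paper's Lemma~\ref{lem: distance est} is engineered to handle: for the inner estimate it foliates $B_r$ by level sets of a harmonic function $h$ vanishing on a small ball $B_{r/4K}(x_0)\subset E$ supplied by the John condition, and the divergence theorem on $\{h<s\}\setminus E$ yields $\mathcal{H}^{N-1}(E^c\cap\{h=s\})\le C\,\mathcal{H}^{N-1}(\partial E\cap\{h<s\})$, which correctly charges each interior hole to its own (possibly tiny) piece of $\partial E$. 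Replacing your containment-plus-radial-field argument for $|B_R\setminus\Omega|$ by an argument of this type would close the gap; the remainder of your proof can then go through.
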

In $N=2$ the result can be improved to get an estimate in Hausdorff distance, control of $\|D^2w\|_{L^2_{u}(\Omega)}$ will give control of $\|Dw\|_{L^4(\Omega)}$ by the Poincar\'{e} inequality Lemma~\ref{lem: poincare} which then gives control of $\|w\|_{L^{\infty}(\Omega)}$.

The main additional estimate to go from Proposition~\ref{prop: l2 dist est} to Theorem~\ref{thm: main stability} is the following bound between the $L^2$ pseudo-distance on $\partial \Omega$ and the distance in measure.

\begin{lemma}\label{lem: distance est}
Suppose that $E$ is a set with Lipschitz boundary and $r>0$ such that $|E| \leq K |B_r|$ and the in-radius has $r_{in}(E) \geq r/K$. Then it holds,
\[ \frac{|E \Delta B_r|}{|B_r|} \leq C(N,K) \left(\frac{1}{r^{N-1}}\int_{\partial E} \left(\frac{|x|}{r}-1\right)^2 d\sigma(x)\right)^{1/2}.\]
\end{lemma}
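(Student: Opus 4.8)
The goal is to convert an $L^2$-smallness of $\frac{|x|}{r}-1$ on $\partial E$ into a measure-theoretic bound on $|E\Delta B_r|$. Write $\delta^2 := \frac{1}{r^{N-1}}\int_{\partial E}\left(\frac{|x|}{r}-1\right)^2 d\sigma(x)$, the quantity appearing on the right; after rescaling we may assume $r=1$, so $|E|\le K|B_1|$ and $r_{in}(E)\ge 1/K$. First I would reduce to a statement about the radial graph of $\partial E$: since $0 \in E$ (the in-ball of radius $1/K$ can be assumed centered at the origin, or at least $E$ contains a fixed ball around the origin), I would parametrize the symmetric difference by rays from the origin. For each direction $\theta\in S^{N-1}$, let $I(\theta) = \{\rho>0 : \rho\theta \in E\}$. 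The contribution of that ray to $|E\Delta B_1|$ is $\int_0^\infty |\indicator_{I(\theta)}(\rho) - \indicator_{[0,1)}(\rho)|\,\rho^{N-1}\,d\rho$, and I want to bound $\int_{S^{N-1}}(\cdots)\,d\theta$.

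The key point is that along each ray, every transition of $\indicator_E$ between $0$ and $1$ happens at a boundary point $x\in\partial E$, and the ``cost'' of that transition in the symmetric-difference integral is roughly $|\,|x|-1\,|$ times a power of $|x|$ (bounded above and below using $|E|\le K|B_1|$, which forces $\partial E$ to lie inside a ball of controlled radius $R(N,K)$, and $r_{in}(E)\ge 1/K$). So the ray contribution is controlled by $\sum_{x\in\partial E\cap \mathbb{R}_+\theta} w(x)\,|\,|x|-1\,|$ for a bounded weight $w$. Integrating over $\theta$ and using the area/coarea relation $d\sigma(x) \gtrsim |x|^{N-1}\,d\theta$ along the boundary (the Jacobian factor relating the radial parametrization of $\partial E$ to spherical measure; here Lipschitz regularity of $\partial E$ is used to make the boundary integral and the ray decomposition compatible, with a harmless contribution from the tangential part of the normal), this yields
\[
\frac{|E\Delta B_1|}{|B_1|} \le C(N,K)\int_{\partial E}\left|\,|x|-1\,\right| d\sigma(x).
\]
Finally, by Cauchy--Schwarz and $\sigma(\partial E)\le C(N,K)$ (again from the containment $E\subset B_{R(N,K)}$ together with, say, an isoperimetric-type or Lipschitz bound on $\partial E$), the right side is at most $C(N,K)\left(\int_{\partial E}(\,|x|-1\,)^2 d\sigma\right)^{1/2} = C(N,K)\,\delta$, which is the claim.

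The main obstacle I anticipate is making the ray-decomposition argument rigorous without assuming $E$ is star-shaped: along a given ray $\indicator_E$ can oscillate several times, so one must carefully account for all boundary crossings and check that the total symmetric-difference mass is still dominated by the sum of $|\,|x|-1\,|$ over crossing points with a weight bounded independently of the number of oscillations. The clean way to handle this is the identity, valid pointwise along each ray,
\[
\left|\,\indicator_{I(\theta)}(\rho) - \indicator_{[0,1)}(\rho)\,\right| \le \#\{x\in\partial E\cap\mathbb{R}_+\theta : |x|\ \text{lies between}\ \rho\ \text{and}\ 1\},
\]
so that integrating in $\rho$ against $\rho^{N-1}\,d\rho$ and swapping the order of summation and integration pins each boundary crossing's cost to roughly $|x|^{N-1}|\,|x|-1\,|$, which is bounded by $C(N,K)|\,|x|-1\,|$ on the annular region $1/K \le |x| \le R(N,K)$ where $\partial E$ lives. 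A secondary technical point is the comparison between $\int_{\partial E} f\,d\sigma$ and $\int_{S^{N-1}}\sum_{x} f(x)\,d\theta$; the Lipschitz hypothesis guarantees the normal $\nu$ is defined $\sigma$-a.e. and $|\langle \widehat{x},\nu(x)\rangle|$ is the relevant Jacobian, but this factor can degenerate, so one should instead only use the \emph{one-sided} bound $d\theta \le |x|^{-(N-1)}\,d\sigma(x)$ coming from the fact that the radial projection $\partial E \to S^{N-1}$ is surjective (since $0\in E$) — which is all that the argument above requires.
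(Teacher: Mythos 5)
Your plan has a genuine gap at the very last step, and it is not a technicality. After reducing to the $L^1$-type bound $\frac{|E\Delta B_1|}{|B_1|}\le C(N,K)\int_{\partial E}\bigl|\,|x|-1\,\bigr|\,d\sigma$, you pass to the $L^2$ quantity by Cauchy--Schwarz using $\sigma(\partial E)\le C(N,K)$. But nothing in the hypotheses bounds the perimeter of $E$ from above: ``Lipschitz boundary'' is qualitative, and the volume bound $|E|\le K|B_r|$ together with the in-radius bound only give a \emph{lower} bound on perimeter via the isoperimetric inequality. A set of bounded volume and fixed in-radius can have arbitrarily large (or wildly oscillating) boundary, so $\sigma(\partial E)^{1/2}$ cannot be absorbed into $C(N,K)$ and the $L^1\to L^2$ conversion fails. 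For the same reason your claim that $|E|\le K|B_1|$ ``forces $\partial E$ to lie inside a ball of controlled radius $R(N,K)$'' is false (a thin tentacle of negligible volume can reach arbitrarily far), so the weight $\max(|x|,1)^{N-1}$ appearing after you swap the sum and the $\rho$-integral is not bounded by $C(N,K)$ either. A smaller issue: the in-ball need not contain the origin, so the case where the in-ball center is close to $\partial B_1$ has to be treated separately before the ray decomposition makes sense.

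The paper's proof is built precisely to avoid needing a perimeter bound. It splits $E\Delta B_r$ into the annulus $A_\e=B_{(1+\e)r}\setminus B_{(1-\e)r}$, estimated trivially by $C_N\e|B_r|$, plus the far regions $E\setminus B_{(1+\e)r}$ and $B_{(1-\e)r}\setminus E$. For the outer region it uses the divergence theorem applied to $x/|x|$ (resp.\ to a harmonic function for the inner region) to get $\mathcal{H}^{N-1}(E\cap\partial B_s)\le \mathcal{H}^{N-1}(\partial E\setminus B_s)$, then inserts the pointwise inequality $1\le (r^{-1}s-1)^{-2}(r^{-1}|x|-1)^2$ on $\partial E\setminus B_s$ and integrates in $s$; the role of ``Cauchy--Schwarz'' is played by the finite radial mass $\int_{(1+\e)r}^{\infty}(r^{-1}s-1)^{-2}\,ds=r/\e$, and optimizing $\e$ against the annulus term produces the square root. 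If you want to salvage a ray-based argument you would need an analogous device that charges each slice only to the $L^2$ boundary integral with an $\e$-dependent weight, rather than to the total surface measure.
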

\noindent We remark that the assumption that $E$ has Lipschitz boundary is not really necessary, the same result, appropriately stated, would hold for sets of finite perimeter.

We will return to the proof of Lemma~\ref{lem: distance est} below.  First we complete the proof of Theorem~\ref{thm: main stability}.

\begin{proof}[Proof of Theorem~\ref{thm: main stability}]
We call as before $x_0$ to be the base point from the John domain property and,
 \[ v(x) = a - \frac{\lambda(\Omega)}{2N}|x-x_1|^2 \ \hbox{ and } \ w = u - v\]
where, as in \eref{pchoice}, the parameters $a,x_1$ are chosen so that $w(x_0) = Dw(x_0) = 0$. Without loss we can assume $x_1 = 0$. Recall, as \eref{llb} and using $B_{\textup{diam}(\Omega)/2L_0}(x_0) \subset \Omega$ which follows from \eqref{a},
\[ c_N \textup{diam}(\Omega)^{-1}\leq \lambda(\Omega) \leq c_NL_0 \textup{diam}(\Omega)^{-1}.\] 
  Then we get the following bounds,
\[ B_{c_N N/\lambda(\Omega)}(x_0) \subset \Omega \ \hbox{ and } \ |\Omega| \leq C(N,L_0)|B_{N/\lambda(\Omega)}|. \] 
To shorten the notation we give the name $\mu_0$ to the non-dimensional ratio $\textup{diam}(\Omega)/\rho_0$.

Now we can apply Lemma~\ref{lem: distance est} and Proposition~\ref{prop: l2 dist est} to find 
\begin{align}
\frac{|\Omega \Delta B_{N/\lambda(\Omega)}|}{|B_{N/\lambda(\Omega)}|} &\leq C(L_0) \left(\lambda(\Omega)^{N-1}\int_{\partial \Omega} \left(\tfrac{\lambda(\Omega)}{N}|x|-1\right)^2 d\sigma(x)\right)^{1/2} \notag\\
& \leq C(L_0,\mu_0)\left(\lambda(\Omega)^{N-1}\int_{\partial \Omega} (|Du|^2-1)^2 \ d \sigma(x)\right)^{1/2} \label{e.measureest1}.
\end{align}
Next we aim for an estimate of,
\begin{equation*}
\frac{N}{\lambda(\Omega)} - r_*.
 \end{equation*}
We still have a piece of information we have not used, which is the volume constraint, we use it in the following way,
\begin{align}
\lambda(\Omega)\textup{Vol} &= \int_{\Omega} |Du|^2 \ dx \notag \\
&= \int_{B_{N/\lambda(\Omega)}} |Dv|^2 \ dx + \int_{\Omega}|Du|^2- |Dv|^2 \ dx \label{e.dirichletsplit}  \\
& \quad \quad  \quad \cdots+ \int({\bf 1}_{\Omega \setminus B_{N/\lambda(\Omega)}} - {\bf 1}_{ B_{N/\lambda(\Omega)} \setminus \Omega})|Dv|^2 \ dx \notag
\end{align}
We look at each term individually, the first term is,
\[ \int_{B_{N/\lambda(\Omega)}} |Dv|^2 \ dx = \frac{N}{N+2}|B_{N/\lambda(\Omega)}|. \]
The remaining terms need to be estimated, for the middle term we use the Poincar\'{e}-type inequality Lemma~\ref{lem: poincare},
\begin{align*}
 \left|\int_{\Omega}|Du|^2- |Dv|^2 \ dx\right| &\leq \|Du+Dv\|_{L^{\frac{2N}{N+1}}(\Omega)}\|Dw\|_{L^{\frac{2N}{N-1}}(\Omega)}\\
 &\leq C\left(L_0,\mu_0\right)(\|Dv\|_{L^{\frac{2N}{N+1}}(\Omega)}+|\Omega|^{\frac{1}{2N}}\|Du\|_{L^{2}(\Omega)})\|D^2w\|_{L^{2}_{d_\Omega}(\Omega)} \\
 &\leq C\left(L_0,\mu_0\right)(|\Omega|^{\frac{N+1}{2N}}\lambda(\Omega)\textup{diam}(\Omega)+|\Omega|^{\frac{1}{2N}}\lambda(\Omega)^{1/2}\textup{Vol}^{1/2})\|D^2w\|_{L^{2}_{d_\Omega} }\\
 &\leq C\left(L_0,\mu_0\right)(|\Omega|^{\frac{N+1}{2N}}+r_*^{\frac{N+1}{2}})\||Du|^2-1\|_{L^{2}(\partial \Omega)}
 \end{align*}
 In the last inequality we have bounded $\lambda(\Omega) \textup{diam}(\Omega)$ by $c_N\textup{diam}(\Omega)/\rho_0 = c_N \mu_0$, similarly for $|\Omega|^{\frac{1}{2N}}\lambda(\Omega)^{1/2}$, and we have used Proposition~\ref{prop: fund est}.  For the final term of \eref{dirichletsplit} we estimate,
 \begin{align*}
 \left|\int({\bf 1}_{\Omega \setminus B_{N/\lambda(\Omega)}} - {\bf 1}_{ B_{N/\lambda(\Omega)} \setminus \Omega})|Dv|^2 \ dx\right| &\leq C(1+\lambda(\Omega)^2\textup{diam}(\Omega)^2)|\Omega \Delta B_{N/\lambda(\Omega}| \\
 &\leq C(\mu_0)|\Omega \Delta B_{N/\lambda(\Omega}|
 \end{align*}
 where we have just estimated $|Dv|$ by it's supremum on the region of integration, which depends on the maximum between $\textup{diam}(\Omega)$ and $N/\lambda(\Omega)$.
 
 Combining the estimates on each of the terms in \eref{dirichletsplit} we end up with the estimate,
 \begin{align*}
  \lambda(\Omega)\left| \left(\frac{N}{\lambda(\Omega)}\right)^N - r_*^N\right| &\leq \left| \left(\frac{N}{\lambda(\Omega)}\right)^{N+1} - \omega_N^{-1}(N+2)\textup{Vol}\right| \\
  &\leq C(L_0,\mu_0)(|\Omega|^{\frac{N+1}{2N}}+r_*^{\frac{N+1}{2}})\||Du|^2-1\|_{L^{2}(\partial \Omega)} \\
  & \quad \cdots +C(\mu_0)|\Omega \Delta B_{N/\lambda(\Omega}|  
  \end{align*}
 where for the first inequality we have used the elementary inequalities $|x^{n} - 1| \geq |x-1|$ for all $x>0$ and $n \geq1$ and,
 \[ |x^n - a^n| = a^n|(x/a)^n-1| \geq a^n|(x/a)-1| = a^{n-1}|x-a|.\]
 Now we can combine the estimate for $|\Omega \Delta B_{N/\lambda(\Omega)}|$ from \eref{measureest1} with the above estimates for $\frac{N}{\lambda(\Omega)} - r_*$,
 \begin{align*}
 |\Omega \Delta B_{r_*}| &\leq |\Omega \Delta B_{N/\lambda(\Omega)}| + |B_{N/\lambda(\Omega)} \Delta B_{r_*}| \\
 &\leq C(\mu_0)|\Omega \Delta B_{N/\lambda(\Omega}| + C(L_0,\mu_0)(|\Omega|^{\frac{N+1}{2N}}+r_*^{\frac{N+1}{2}})\||Du|^2-1\|_{L^2(\partial \Omega)} \\
 & \leq C(L_0,\mu_0)(\textup{diam}(\Omega)^{\frac{N+1}{2}}+r_*^{\frac{N+1}{2}})\||Du|^2-1\|_{L^2(\partial \Omega)}.
 \end{align*}
 Dividing on both sides by $r_*^{N}$ yields the desired estimate,
 \[ \frac{|\Omega \Delta B_{r_*}| }{|B_{r_*}|} \leq C\left(L_0,\mu_0,\frac{\textup{diam}(\Omega)}{r_*}\right)\left(\frac{1}{r_*^{N-1}}\int_{\partial \Omega}(|Du|^2-1)^2 \ d \sigma \right)^{1/2}.\]
\end{proof}

Now we return to the proof of Lemma~\ref{lem: distance est}.
\begin{proof}[Proof of Lemma~\ref{lem: distance est}]
Let $\e>0$ to be chosen later and call $A_\e$ to be the annulus $B_{(1+\e)r}\setminus B_{(1-\e)r}$. We may rewrite,
\[ |{E} \Delta B_r| = |{E} \cap A_\e |+|{E} \setminus B_{(1+\e)r}|+|B_{(1-\e)r}\setminus {E}|.\]
For the first term we can estimate easily,
\[ |{E} \cap A_\e |\leq |A_\e| = ((1+\e)^N-(1-\e)^N)|B_r| \leq C_N\e|B_r| ,\]
    as long as we choose $\e \leq 1$.  For the second term we use the co-area formula to rewrite,
\[ |{E} \setminus B_{(1+\e)r}| = \int_{(1+\e)r}^\infty \mathcal{H}^{N-1}({E} \cap \partial B_s) ds\]
Then using the divergence theorem,
\[ 0 \leq \int_{{E} \setminus B_s} \grad \cdot (\frac{x}{|x|}) \ dx = \int_{\partial {E} \setminus B_s} \frac{x}{|x|} \cdot \nu \ d \sigma(x) - \int_{{E} \cap \partial B_s} d \sigma(x),\]
and so we have,
\[\mathcal{H}^{N-1}({E} \cap \partial B_s) \leq \mathcal{H}^{N-1}(\partial {E} \setminus B_s).\]
Now on $\partial {E} \setminus B_s$ we have $1 \leq (r^{-1}s-1)^{-2}(r^{-1}|x|-1)^2$ and therefore,
\begin{align*}
 |{E} \setminus B_{(1+\e)r}| &\leq \int_{(1+\e)r}^\infty \mathcal{H}^{N-1}(\partial {E} \setminus B_s) ds  \\
 &\leq  \left(\int_{(1+\e)r}^\infty (r^{-1}s-1)^{-2}   \ ds\right) \left( \int_{\partial {E}} (r^{-1}|x|-1)^2 \  d\sigma(x)\right).
 \end{align*}
Calculating the integral above yields
\begin{equation}\notag
|{E} \setminus B_{(1+\e)r}| \leq \frac{r}{\e} \int_{\partial {E}} (r^{-1}|x|-1)_+^2 d\sigma(x).
\end{equation}
Now we choose $\e$ so that the two terms in the estimate are of the same size, we can choose,
\[ \e^2  = \frac{1}{r^{N-1}}\int_{\partial E} (r^{-1}|x|-1)_+^2 d\sigma(x).\]
If $\e \leq 1$ as chosen then combining the estimates we obtain,
\begin{equation}\label{e.outerbd}
 \frac{|E \setminus B_r|}{|B_r|} \leq C(N) \left(\frac{1}{r^{N-1}}\int_{\partial E} \left(\frac{|x|}{r}-1\right)^2 d\sigma(x)\right)^{1/2}
 \end{equation}
otherwise,
\[ \frac{|E \setminus B_r|}{|B_r|}  \leq (1+K) \leq (1+K) \e^{1/2} = (1+K)\left(\frac{1}{r^{N-1}}\int_{\partial E} \left(\frac{|x|}{r}-1\right)^2 d\sigma(x)\right)^{1/2}.\]
Either way the desired result holds.

Next we will obtain, by a similar argument,
\begin{equation}\label{e.innerbd}
 \frac{| B_r \setminus E|}{|B_r|}  \leq C(N,K)\left(\frac{1}{r^{N-1}}\int_{\partial E} \left(\frac{|x|}{r}-1\right)^2 d\sigma(x)\right)^{1/2}.
 \end{equation}
 By the assumption there exists $x_0$ with $B_{r/K}(x_0) \subset \Omega$.  If $|x_0| \geq (1-\frac{1}{2K})r$ then,
 \[ \frac{|E \setminus B_r|}{|B_r|} \geq c(N,K) \geq c(N,K)\frac{| B_r \setminus E|}{|B_r|},\]
and \eref{innerbd} follows from \eref{outerbd}.  Otherwise we can take $|x_0| \leq (1-\frac{1}{2K})r$.  Now let us take $h$ to be the harmonic function,
\begin{equation}
\left\{
\begin{array}{l}
-\Delta h = 0 \ \hbox{ in } \ B_r \setminus B_{r/4K}(x_0) \vspace{1.5mm}\\
h = r \ \hbox{ on } \ \partial B_r \ \hbox{ and } \ h = 0 \ \hbox{ on } \ \partial B_{r/4K}(x_0) 
\end{array}\right.
\end{equation}
It follows from Hopf Lemma and the star-shapedness of $B_r$ with respect to $B_{r/4K}(x_0)$ that there is a constant $c(N,K)$ such that,
\[ D h \cdot (x-x_0) \geq  c(N,K) \ \hbox{ on } \ \partial B_r \cup \partial B_{r/4K}(x_0).\]
It is easy to check that $Dh \cdot (x-x_0)$ is harmonic so actually we have,
\[ |Dh| \geq c(N,K) \ \hbox{ in } \ B_{r} \setminus B_{r/4K}(x_0).\]
A standard barrier argument and the sub-harmonicity of $|Dh|$ shows $|Dh| \leq C(N,K)$.  We use the divergence theorem, using again that $\frac{1}{4K}B_r (x_0)\subset B_r \subset  E$, for all $ 0 < s < r$,
\[  -\int_{\partial {E}\cap \{ h < s\} } Dh \cdot \nu_E \ d \sigma(x) + \int_{{E}^C \cap \partial \{ h <s\}} |Dh|d \sigma(x) = \int_{ \{ h <s \} \setminus {E}} \Delta h \ dx = 0 .\]
Thus we obtain, using the bounds on $|Dh|$,
\[ \mathcal{H}^{N-1}({E}^C \cap \{ h =s\}) \leq C(N,K)\mathcal{H}^{N-1}(\partial {E}\cap \{ h < s\}).\]
 Let $\e>0$, to be chosen, we use the co-area formula with the level set function $h$,
\begin{align*}
 |B_{(1-\e)r} \setminus E| & \leq C(N,K)\int_{B_{(1-\e)r}}  \indicator_{E^C}|Dh|dx\\
  &\leq C\int_{0}^{(1-c\e)r} \mathcal{H}^{N-1}({E}^C \cap \{ h = s\}) ds  \\
 &\leq C\int_{0}^{(1-c\e)r} \mathcal{H}^{N-1}(\partial {E}\cap \{ h < s\}) ds  \\
 &\leq C \left(\int_{0}^{(1-c\e)r}  (r^{-1}s-1)^{-2} \ ds\right) \left( \int_{\partial {E}} (r^{-1}|x|-1)^2 \  d\sigma(x)\right) \\
 & \leq C\frac{r}{\e} \int_{\partial {E}} (r^{-1}|x|-1)_+^2 d\sigma(x).
 \end{align*}
 The rest of the proof is the same as the argument for \eref{outerbd} above, choosing as before $\e^2 =\frac{1}{r^{N-1}}\int_{\partial E} (r^{-1}|x|-1)_+^2 d\sigma(x)$.
\end{proof}

\section{Exponential convergence to equilibrium conditional on regularity}\label{sec: exponential rate}
In this final section we discuss the application of our quantitative stability result to the long time behavior of the contact angle motion problem \eref{dropletprob}.  We recall the problem,
\begin{equation}\label{e.dropletprob2}
\left\{
\begin{array}{lll}
-\Delta u(x,t) = \lambda(t) & \hbox{ in } & \Omega_t(u) = \{u(\cdot,t)>0\} \vspace{1.5mm}\\
\tfrac{\partial_t u}{|Du|} = |Du|^2-1 & \hbox{ on } & \Gamma_t(u) = \partial \Omega_t(u),
\end{array}\right.
\end{equation}
where $\lambda(t)$ is a Lagrange multiplier enforcing the volume constraint,
\[ \int u(\cdot,t) \ dx = \text{Vol} \ \hbox{ for all } \ t>0.\]
We compute the energy decay estimate which was stated in the introduction,
\begin{align*}
\frac{d}{dt}\mathcal{J}(\Omega_t) &= \int_{\Omega_t} 2Du\cdot Du_t+ \int_{\Gamma_t} (|Du|^2+1)(|Du|^2-1) \\
&= -2\lambda(t)\int_{\Omega_t} u_t +\int_{\Gamma_t} 2u_t Du\cdot n+(|Du|^2+1)(|Du|^2-1) \\
&= -2\lambda(t) \frac{d}{dt}\left(\int_{\Omega_t} u\right)+\int_{\Gamma_t} -2 (|Du|^2-1)|Du|^2+(|Du|^2+1)(|Du|^2-1) \\
&= -\int_{\Gamma_t} (|Du|^2-1)^2 \leq 0.
\end{align*}
Thus we have obtained,
\begin{equation}\label{e.energyest}
\frac{d}{dt}(\mathcal{J}(\Omega_t) - \mathcal{J}(B_{r_*}))= -\int_{\Gamma_t} (|Du|^2-1)^2,
\end{equation}
or in integrated form,
\begin{equation}\label{e.intform}
\mathcal{J}(\Omega_t) - \mathcal{J}(B_{r_*})= \mathcal{J}(\Omega_0) - \mathcal{J}(B_{r_*}) - \int_0^t\int_{\Gamma_s} (|Du|^2-1)^2 \ d\sigma ds.
\end{equation}
The problem with using this estimate directly, at least in $N \geq 3$, is that the stability result Theorem~\ref{thm: main stability} controls the measure difference squared $|\Omega_t \Delta B_{r_*}|^2$ by the energy dissipation and it is not clear whether $|\Omega_t \Delta B_{r_*}|^2$ controls the energy gap.  Due to this issue we take a different approach, applying the Gr\"{o}nwall argument directly to $|\Omega_t \Delta B_{r_*}|^2$.

For this we need the optimal quantitative Faber-Krahn inequality proven recently by Brasco, De Philippis and Velichkov \cite{Brasco:2015aa}.  Basically this inequality gives the sharp lower bound quadratic growth of the energy $\mathcal{J}(\Omega)$ near it's minimum in terms of the $L^1$ distance.
\begin{theorem}[Brasco, De Philippis, Velichkov]\label{thm: faber-krahn}
There exists a positive constant $c_N$ depending only on dimension such that for every open set $\Omega \subset \real^N$ with finite measure and any ball $B$,
\[ |\Omega|^{\frac{2}{N}+1}\lambda(\Omega) - |B|^{\frac{2}{N}+1}\lambda(B) \geq c_N \textup{Vol}^2 \mathcal{A}(\Omega)^2,\]
where $\mathcal{A}(\Omega)$ is the asymmetry, the infimum over all balls $B \subset \real^N$ of $|\Omega \Delta B|/|B|$.
\end{theorem}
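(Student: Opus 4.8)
This is the main result of \cite{Brasco:2015aa}, which we use here as a black box; we only describe how its proof goes. The plan is to pass to a scale invariant shape functional and run the \emph{selection principle} of Cicalese--Leonardi and Acerbi--Fusco--Morini, reducing the sharp quadratic bound to the case of nearly spherical sets. Let $\tilde\lambda(\Omega)$ denote the quantity in \eref{ulambdadef} computed with $\textup{Vol}=1$, so that $\lambda(\Omega) = \textup{Vol}^2\,\tilde\lambda(\Omega)$; from $|tE| = t^N|E|$ and $\tilde\lambda(tE) = t^{-N-2}\tilde\lambda(E)$ the product $|\Omega|^{\frac{2}{N}+1}\tilde\lambda(\Omega)$ is scale invariant. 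Dividing the asserted inequality by $\textup{Vol}^2$ and normalizing $|\Omega| = |B_1|$, it is equivalent to prove
\[ \mathcal{F}(\Omega) := |\Omega|^{\frac{2}{N}+1}\tilde\lambda(\Omega) - |B_1|^{\frac{2}{N}+1}\tilde\lambda(B_1) \geq c_N\,\mathcal{A}(\Omega)^2 \qquad\text{whenever } |\Omega| = |B_1|, \]
the deficit $\mathcal{F}\geq 0$ being the scale invariant torsional rigidity deficit, which by \eref{FK} and the equality case of Schwarz symmetrization vanishes exactly on balls. By lower semicontinuity of the torsion energy under $L^1$ convergence of uniformly bounded sets, together with uniqueness of the ball as minimizer, $\mathcal{F}(\Omega_k)\to 0$ forces $\mathcal{A}(\Omega_k)\to 0$. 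Arguing by contradiction, suppose the sharp bound fails: we obtain $\Omega_k$ with $|\Omega_k| = |B_1|$, $0 < \mathcal{A}(\Omega_k) =: \e_k \to 0$, and $\mathcal{F}(\Omega_k) = o(\e_k^2)$.

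The core of the argument is the selection principle. For a large dimensional constant $\Lambda$ consider the penalized problems
\[ \min\Big\{\, |\Omega|^{\frac{2}{N}+1}\tilde\lambda(\Omega) + \Lambda\,\big|\mathcal{A}(\Omega) - \e_k\big| \;:\; |\Omega| = |B_1| \,\Big\}, \]
with minimizers $U_k$ obtained by the direct method (lower semicontinuity plus a concentration argument preventing loss of mass). Minimality gives $\mathcal{F}(U_k) + \Lambda\,|\mathcal{A}(U_k) - \e_k| \leq \mathcal{F}(\Omega_k) = o(\e_k^2)$, hence $\mathcal{A}(U_k) = \e_k + o(\e_k^2)$ and $\mathcal{F}(U_k) = o(\mathcal{A}(U_k)^2)$; in particular $\mathcal{F}(U_k)\to 0$ and $U_k \to B_1$ in $L^1$ up to translation. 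The crucial regularity step is that, with $\Lambda$ chosen independently of $k$, the $U_k$ are quasi-minimizers of the perimeter with uniform constants: the shape derivative of $\Omega\mapsto |\Omega|^{\frac{2}{N}+1}\tilde\lambda(\Omega)$ is governed by the torsion function, whose a priori and density estimates feed the De Giorgi--Alt--Caffarelli regularity theory. Consequently, for large $k$, $U_k$ is a \emph{nearly spherical set}: $U_k = \{\, r\omega : \omega\in\mathbb{S}^{N-1},\ 0\leq r < 1 + \varphi_k(\omega)\,\}$ with barycenter at the origin, $|U_k| = |B_1|$, and $\|\varphi_k\|_{C^1(\mathbb{S}^{N-1})}\to 0$.

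This contradicts a \emph{Fuglede-type} second order estimate for nearly spherical sets: there is $c_N>0$ with $\mathcal{F}(\Omega) \geq c_N\|\varphi\|_{L^2(\mathbb{S}^{N-1})}^2$ once $\|\varphi\|_{C^1}$ is small, which combined with $\mathcal{A}(\Omega)\leq C_N\|\varphi\|_{L^1(\mathbb{S}^{N-1})}\leq C_N\|\varphi\|_{L^2(\mathbb{S}^{N-1})}$ gives $\mathcal{F}(\Omega)\geq c_N'\,\mathcal{A}(\Omega)^2$, contradicting $\mathcal{F}(U_k) = o(\mathcal{A}(U_k)^2)$ for $k$ large. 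The Fuglede estimate is proved by expanding $\varphi = \sum_{j\geq 0} a_j Y_j$ in spherical harmonics and computing $\tilde\lambda(\Omega)$ to second order in $\varphi$: the torsion equation decouples mode by mode into a one dimensional problem on $(0,1)$, the first variation vanishes since the ball is a critical point, the volume constraint forces $a_0 = O(\|\varphi\|_{L^2}^2)$ and the vanishing barycenter forces $a_1 = O(\|\varphi\|_{L^2}^2)$, and the remaining second variation is a diagonal quadratic form $\sum_{j\geq 2}\mu_j a_j^2$ with $\mu_j \geq \mu_2 > 0$ for every $j\geq 2$; thus $\mathcal{F}(\Omega)\gtrsim \sum_{j\geq 2} a_j^2 \gtrsim \|\varphi\|_{L^2}^2$.

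The main obstacle is precisely this regularity step inside the selection principle: for the torsional rigidity, an integral rather than perimeter-type functional, one must extract uniform quasi-minimality of the perimeter from shape-derivative and density bounds on the torsion function, and this has to be done compatibly with the asymmetry penalization $\Lambda|\mathcal{A}(\Omega)-\e_k|$, whose presence complicates both the existence and the regularity of the $U_k$. An alternative route is to combine the quantitative Faber--Krahn inequality for the first Dirichlet eigenvalue with the Kohler--Jobin inequality relating the eigenvalue and the torsional rigidity, but \cite{Brasco:2015aa} in fact handles both quantities by the same machinery.
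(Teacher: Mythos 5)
The paper does not prove this theorem either: it is quoted verbatim from \cite{Brasco:2015aa} and used as a black box, which is exactly what you do. Your accompanying sketch of the selection-principle argument of \cite{Brasco:2015aa} (scale-invariant reduction, penalized minimizers, uniform quasi-minimality of the perimeter feeding Alt--Caffarelli-type regularity, and the Fuglede expansion in spherical harmonics for nearly spherical sets) is a faithful account of that paper's proof, so there is nothing to reconcile with the text here.
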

From this Theorem we can easily derive a stability estimate of the capillary energy.
\begin{corollary}\label{cor: fk cor}
For every open set $\Omega \subset \real^N$ with finite measure and $B_r*$ the ball with minimal energy for $\mathcal{J}$,
\begin{equation}\label{e.fkcor}
 (\mathcal{J}(\Omega) - \mathcal{J}(B_{r_*}))^{1/2} \geq c\left(N,\frac{r_*}{|\Omega|^{\frac{1}{N}}}\right)\mathcal{J}(B_{r_*})^{1/2}\frac{|\Omega \Delta B_{r_*}|}{|B_{r_*}|}.
\end{equation}
\end{corollary}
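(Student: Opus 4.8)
The plan is to split \emph{both} the energy gap $\mathcal{J}(\Omega)-\mathcal{J}(B_{r_*})$ and the measure gap $|\Omega\Delta B_{r_*}|/|B_{r_*}|$ into a ``shape'' part and a ``volume'' part, control the two shape parts against each other by Theorem~\ref{thm: faber-krahn}, and control the two volume parts against each other using the strict convexity of the one–parameter radial energy $m\mapsto \mathcal{J}(B_m)$, where $B_m$ denotes a ball of measure $m$. First I would note that $\mathcal{J}$ is homogeneous, so after a dilation — which multiplies $\mathcal{J}$ by a fixed factor while leaving the dimensionless quantities $\mathcal{A}(\Omega)$, $|\Omega\Delta B_{r_*}|/|B_{r_*}|$ and $r_*/|\Omega|^{1/N}$ unchanged — we may assume $\textup{Vol}=1$. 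Then $r_*$ is an $N$-dependent constant and every quantity below will be a function of $N$ and the single scale-invariant parameter $t:=|\Omega|/|B_{r_*}|$, which is equivalent, up to $N$-dependent factors, to $r_*/|\Omega|^{1/N}$.

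\textbf{The two lower bounds.} I would write
\[
\mathcal{J}(\Omega)-\mathcal{J}(B_{r_*})=\big[\mathcal{J}(\Omega)-\mathcal{J}(B_{|\Omega|})\big]+\big[\mathcal{J}(B_{|\Omega|})-\mathcal{J}(B_{r_*})\big],
\]
with both brackets nonnegative, the first by the Faber--Krahn inequality \eref{FK} and the second because $B_{r_*}$ minimizes $\mathcal{J}$ among balls. In the first bracket the measure terms cancel since $|B_{|\Omega|}|=|\Omega|$, so it equals $\textup{Vol}\,(\lambda(\Omega)-\lambda(B_{|\Omega|}))$, and applying Theorem~\ref{thm: faber-krahn} with $B=B_{|\Omega|}$ gives $\mathcal{J}(\Omega)-\mathcal{J}(B_{|\Omega|})\ge c_N |\Omega|^{-(2/N+1)}\mathcal{A}(\Omega)^2$. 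For the second bracket, $m\mapsto\mathcal{J}(B_m)=k_N m^{-(2/N+1)}+m$ is smooth and strictly convex on $(0,\infty)$ with minimum at $m=|B_{r_*}|$; evaluating this explicit function yields
\[
\mathcal{J}(B_{|\Omega|})-\mathcal{J}(B_{r_*})\ \ge\ c(N,t)\,\mathcal{J}(B_{r_*})\left(\frac{\big||\Omega|-|B_{r_*}|\big|}{|B_{r_*}|}\right)^{2},
\]
where $c(N,t)>0$ for every finite $t$ but degenerates like $1/t$ as $t\to\infty$ — exactly the degeneration carried by the constant in \eref{fkcor}.

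\textbf{From $\mathcal{A}$ back to $\Omega\Delta B_{r_*}$, and conclusion.} Next I would recombine via a triangle inequality: pick a ball $B^{*}$ nearly realizing the infimum defining $\mathcal{A}(\Omega)$; when $\mathcal{A}(\Omega)$ is small one checks $|B^{*}|\sim_N|\Omega|$, and centering $B_{r_*}$ at the center of $B^{*}$ gives
\[
\frac{|\Omega\Delta B_{r_*}|}{|B_{r_*}|}\ \le\ C(N)\max(1,t)\,\mathcal{A}(\Omega)+\frac{\big||\Omega|-|B_{r_*}|\big|}{|B_{r_*}|},
\]
with the range $\mathcal{A}(\Omega)\gtrsim 1$ handled trivially by $|\Omega\Delta B_{r_*}|\le|\Omega|+|B_{r_*}|$. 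Squaring this, inserting the two lower bounds above, and using that (since $\textup{Vol}=1$) the ratio $c_N|\Omega|^{-(2/N+1)}/\mathcal{J}(B_{r_*})$ is itself a positive function of $N$ and $t$, I obtain
\[
\mathcal{J}(\Omega)-\mathcal{J}(B_{r_*})\ \ge\ c(N,t)\,\mathcal{J}(B_{r_*})\left(\frac{|\Omega\Delta B_{r_*}|}{|B_{r_*}|}\right)^{2},
\]
and taking square roots finishes the proof.

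\textbf{Main obstacle.} The convexity computation and the triangle inequality are routine; the delicate point, and where I expect the actual work to be, is the bookkeeping of constants in terms of the single parameter $t=|\Omega|/|B_{r_*}|$ (equivalently $r_*/|\Omega|^{1/N}$): one must check that the asymmetry lower bound produced by Theorem~\ref{thm: faber-krahn} and the volume lower bound produced by convexity of $m\mapsto\mathcal{J}(B_m)$ can \emph{both} be compared to $\mathcal{J}(B_{r_*})$ with constants depending only on $N$ and $t$, which is precisely why the normalization $\textup{Vol}=1$ (equivalently the explicit homogeneity of $\mathcal{J}$) has to be exploited from the start.
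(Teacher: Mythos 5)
Your argument is correct and follows essentially the same route as the paper: split $\mathcal{J}(\Omega)-\mathcal{J}(B_{r_*})$ at the ball of the same measure as $\Omega$, bound the first piece by the quantitative Faber--Krahn inequality (Theorem~\ref{thm: faber-krahn}) and the second by convexity of the one-parameter radial energy, then recombine with a triangle inequality on the symmetric differences. Your normalization $\textup{Vol}=1$ and your use of the measure variable $m$ in place of the radius $r$ are only cosmetic differences from the paper's proof, and your bookkeeping of the $t$-dependence of the constants is, if anything, more careful.
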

We postpone the proof of the Corollary till the end of the section.  The argument to show that $B_{r_*}$ minimizes the energy $\mathcal{J}$ over all open sets $\Omega$ with finite measure goes as follows.  First let $B_r$ be the ball with volume $|B_r| = |\Omega|$. Then the Polya-Szeg\"{o} principle implies that the Schwarz symmetrization of $u_\Omega$ has the same volume but lower Dirichlet energy than $u_\Omega$ and so 
\[\mathcal{J}(B_r) \leq \mathcal{J}(\Omega).\]  Explicit computation of the radially symmetric solutions, see Appendix~\ref{sec: computations}, shows that $B_{r_*}$ has the minimal energy among all balls.  For these two steps we have separate stability estimates, respectively, the Theorem of \cite{Brasco:2015aa} copied above, and the calculus computation in Appendix~\ref{sec: computations}.

We make several smaller comments about Theorem~\ref{thm: dynamic stability} before we go to the proof.

\begin{remark}
We expect that the convergence modulo translation can be upgraded to convergence to a unique ball $B_{r_*}(x_*)$ using the ideas in \cite{FKdrops} (Proposition $5.2$) with some extra work.
\end{remark}
\begin{remark}
 In $N=2$ it should be possible to get the stability estimate Theorem~\ref{thm: main stability} in Hausdorff distance.  Then one can show a quadratic upper bound on the energy growth near the minimum (in Hausdorff distance) and apply a Gr\"{o}nwall argument directly to the energy.  To go from the exponential convergence of the energy to convergence in measure or in Hausdorff distance one would still need a stability estimate for the Faber-Krahn inequality, although the optimal scaling is not necessary in that case.  
 \end{remark}
Now we prove Theorem~\ref{thm: dynamic stability}, it is very simple given the set up.
\begin{proof}[Proof of Theorem~\ref{thm: dynamic stability}]   We define,
\[ \e(t)^{1/2} =\inf_{x \in \real^N} \frac{|\Omega \Delta B_{r_*}(x)|}{|B_{r_*}|}\]
Instead of trying to use the energy dissipation estimate \eref{intform} directly, we apply the quantitive Faber-Krahn inequality \eref{fkcor} in combination with our stability result Theorem~\ref{thm: main stability} to obtain,
\begin{equation}\label{e.intform2}
r_*^N\e(t) \leq C(\mathcal{J}(\Omega_0) - \mathcal{J}(B_{r_*})) - cr_*^{N-1}\int_0^t \e(t) ds,
\end{equation}
and therefore,
\[\e(t) \leq Cr_*^{-N}(\mathcal{J}(\Omega_0) - \mathcal{J}(B_{r_*}))e^{-cr_*^{-1}t}.\]
\end{proof}
\begin{proof}[Proof of Corollary~\ref{cor: fk cor}]
We carry out the argument described above using the stability estimates,
\[ \mathcal{J}(\Omega) - \mathcal{J}(B_r) = \lambda(\Omega) - \lambda(B_r) \geq c_N\textup{Vol}^2r^{-3N-2}|\Omega \Delta B_r|\]
by Theorem~\ref{thm: faber-krahn}.  By the explicit computation in Appendix~\ref{sec: computations} we have,
\[ \mathcal{J}(B_r) - \mathcal{J}(B_{r_*})  \geq c_N r^{N-2} |r-r_*|^2 \geq c_N r^{-N} |r^N-r_*^N|^2. \]
Thus,
\begin{align*}
 \mathcal{J}(\Omega) - \mathcal{J}(B_{r_*}) &\geq c_N(\textup{Vol}^2r^{-(3N+2)}+r^{-N})|\Omega \Delta B_{r_*}|^2 \\
 &= c_N(r_*^{2N}\textup{Vol}^2r^{-(3N+2)}+r_*^{2N}r^{-N})\left(\frac{|\Omega \Delta B_{r_*}|}{|B_{r_*}|}\right)^2 \\
 &  \geq c_Nf\left(\frac{r_*}{|\Omega|^{1/N}}\right)\mathcal{J}(B_{r_*})\left(\frac{|\Omega \Delta B_{r_*}|}{|B_{r_*}|}\right)^2
 \end{align*}
 where the function $f(s) = s^{3N+2}+s^N$.
\end{proof}

\subsection{Conditions for regularity}\label{sec: conditions} Now we make more precise a set of conditions on the initial data under which the regularity assumed in Theorem~\ref{thm: dynamic stability} is expected to be true.  First we recall the geometric condition introduced in \cite{FKdrops}.  
\begin{definition}
A domain $\Omega$ is said to have the \emph{$\rho$-reflection} property if $B_\rho(0) \subset \Omega$ and for every half space $H \subset \real^N$ which does not intersect $B_\rho(0)$ and the corresponding reflection operator $R$,
\[ \Omega \cap H \subset R(\Omega) \cap H.\]
\end{definition}
The property of $\rho$-reflection is preserved by the flow due to the comparison principle, note that $u(Rx)$ has the same associated Lagrange multiplier as $u$.  This actually requires a bit of work to prove since the comparison required does not have strict ordering at the initial time.  

The $\rho$-reflection property is in a sense a quantified version of the moving planes method.  If $\Omega$ has $\rho$-reflection with $\rho=0$ then $\Omega$ is a ball around $0$.  Sets with $\rho$-reflection satisfy also a strong star-shapedness property as long as $\partial \Omega$ stays away from $B_\rho$.  Precisely,
\[ \sup_{x \in \partial \Omega} |x| - \inf_{x \in \partial \Omega} |x| \leq 4\rho \]
and
\[ \hbox{ $\Omega$ is star-shaped with respect to $B_r(0)$ with } \ r = (\inf_{x \in \partial \Omega} |x|^2 - \rho^2)^{1/2},\]
see \cite{FKdrops} Lemmas $3.23$ and $3.24$.  This is a quantified Lipschitz condition on $\partial \Omega$, and the local Lipschitz constant can be made arbitrarily small if $\rho$ is small.

The $\rho$-reflection property was used to establish the long time existence of viscosity solutions to \eref{dropletprob} in \cite{FKdrops}.
\begin{theorem}[Feldman, Kim]
Suppose that $\Omega_0$ has the $\rho$-reflection property for some $0 \leq \rho < \frac{1}{10}\textup{Vol}^{\frac{1}{N+1}}$, then there exists a global in time viscosity solution of \eref{dropletprob} which has the $\rho$-reflection property for all $t>0$.
\end{theorem}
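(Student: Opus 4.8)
The plan is to construct the solution by a time-discrete scheme in which the free boundary is advanced one step at a time according to the velocity law $\mathcal V_n = |Du|^2-1$ while the Lagrange multiplier is readjusted at each step to restore the volume, and then to propagate the $\rho$-reflection property along the flow using the comparison principle for the problem with the multiplier regarded as a given function of time. The smallness assumption $\rho < \tfrac{1}{10}\textup{Vol}^{1/(N+1)}$ enters to guarantee that $B_\rho(0)$ stays strictly inside $\Omega_t$, which is what makes both the a priori geometric estimates and the propagation argument close.

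First I would analyze the free boundary problem \eref{dropletprob} with $\lambda(\cdot)$ a \emph{prescribed} positive continuous function of time. In that setting the velocity law $\mathcal V_n = |Du|^2 - 1$ is monotone increasing in $|Du|$, so the problem is a one-phase free boundary problem of Hele-Shaw / Stefan type enjoying a comparison principle in the viscosity sense --- ordered sub- and supersolutions at $t=0$ remain ordered --- provable by the usual doubling-of-variables / sup-convolution argument adapted to the free boundary. Existence for prescribed $\lambda$ then follows by Perron's method together with explicit radial barriers, or directly from the discrete scheme. The volume constraint is restored either by a fixed-point argument in the space of admissible multiplier functions $\lambda(\cdot)$, or, more robustly, inside the discrete scheme itself: at each step one solves a domain-variational (obstacle-type) problem that moves $\partial\Omega$ by the velocity law and chooses the new $\lambda$ so that $\int u = \textup{Vol}$, and one passes to the limit as the step size tends to $0$, using the a priori estimates below for compactness and for identifying the limit as a viscosity solution.

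Next, the propagation of $\rho$-reflection. Fix a half-space $H$ with $\overline H \cap B_\rho(0) = \emptyset$ and let $R$ be the reflection across $\partial H$. Since the Laplacian and the velocity law are reflection invariant, and $\lambda(t)$ depends on $\Omega_t$ only through its volume, the function $x \mapsto u(Rx,t)$ solves the same equation on $R(\Omega_t)$ with the \emph{same} multiplier $\lambda(t)$. On $H$ the initial hypothesis $\Omega_0 \cap H \subset R(\Omega_0)\cap H$ amounts to an ordering $u(\cdot,0) \le u(R\,\cdot\,,0)$ in $H$, and one wants comparison to upgrade this to $u(\cdot,t) \le u(R\,\cdot\,,t)$ in $H$ for all $t$, i.e. $\Omega_t \cap H \subset R(\Omega_t)\cap H$. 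The wrinkle flagged in the statement is that at $t=0$ the two free boundaries are not strictly ordered: they touch along $\partial H$, so the standard comparison argument does not apply verbatim on the boundary of the comparison region. I would resolve this by perturbation --- compare $u$ against a slightly dilated and volume-corrected copy of $u(R\,\cdot\,,t)$ (a small homothety toward an interior point of $R(\Omega_0)$, or a small inward normal shift of the free boundary, with $\lambda$ adjusted by a controlled amount to preserve the volume), making the ordering strict at $t=0$, running the comparison, and letting the perturbation parameter vanish. Verifying that this perturbation is admissible and that the resulting error in $\lambda$ is controlled is the technical heart of the argument, and this is the step I expect to be the main obstacle.

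Finally, $\rho$-reflection is not merely preserved: it yields the a priori control that makes the construction global. By \cite{FKdrops} (Lemmas 3.23--3.24), if $\Omega_t$ has $\rho$-reflection with $B_\rho(0) \subset \Omega_t$ then $\sup_{\partial\Omega_t}|x| - \inf_{\partial\Omega_t}|x| \le 4\rho$ and $\Omega_t$ is star-shaped with respect to $B_r(0)$ where $r = (\inf_{\partial\Omega_t}|x|^2 - \rho^2)^{1/2}$, so $\partial\Omega_t$ is a Lipschitz graph over the sphere with small constant when $\rho$ is small. Together with the volume constraint --- which, through the Faber--Krahn inequality \eref{FK}, forces $|\Omega_t|$ comparable to $\textup{Vol}^{N/(N+1)}$ --- these bounds pin $\textup{diam}(\Omega_t)$ between two positive constants and keep the origin uniformly inside $\Omega_t$, so the free boundary can neither pinch off nor escape to infinity nor collapse onto $B_\rho(0)$; hence the discrete scheme can be iterated on all of $[0,\infty)$ and its limit is a global viscosity solution. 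The margin $\rho < \tfrac{1}{10}\textup{Vol}^{1/(N+1)}$ is exactly what keeps $B_\rho(0)$ strictly interior for all time and keeps the free boundary in the Lipschitz regime where the comparison machinery and these estimates stay valid.
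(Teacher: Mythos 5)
This statement is not proved in the paper at all: it is imported verbatim from \cite{FKdrops}, and the only commentary the paper offers is that $\rho$-reflection is preserved by the comparison principle because $u(R\,\cdot)$ has the same Lagrange multiplier as $u$, and that ``this actually requires a bit of work to prove since the comparison required does not have strict ordering at the initial time.'' Your outline identifies exactly this strategy (comparison for the prescribed-$\lambda$ problem applied to $u$ and its reflection, plus a discrete scheme and the star-shapedness estimates of Lemmas 3.23--3.24 of \cite{FKdrops} for global existence), so the plan is the right one. But it is a plan, not a proof: you explicitly defer the step you yourself call ``the technical heart'' --- making the comparison work when the initial ordering $\Omega_0\cap H\subset R(\Omega_0)\cap H$ is not strict --- and that is precisely the part the paper flags as the nontrivial content of the theorem. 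Until that perturbation argument is actually carried out (admissibility of the dilated competitor, control of the induced change in $\lambda$, and passage to the limit in the perturbation parameter), the proposal has a genuine gap at its central point.

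Two further issues in the reflection step deserve attention. First, your justification that $u(R\,\cdot,t)$ has the same multiplier --- ``$\lambda(t)$ depends on $\Omega_t$ only through its volume'' --- is false: by \eref{ulambdadef}, $\lambda(\Omega)$ is the constrained Dirichlet minimum and depends on the full shape of $\Omega$ (indeed the Faber--Krahn inequality \eref{FK} is precisely the statement that it is \emph{not} determined by the volume). The correct reason is that $R$ is an isometry, so $\lambda(R(\Omega))=\lambda(\Omega)$ and $u_{R(\Omega)}=u_\Omega\circ R$. Second, the comparison is run on the half-space $H$, on whose lateral boundary $\partial H$ the functions $u$ and $u(R\,\cdot)$ coincide identically; the dilation you propose to create strict initial separation of the free boundaries simultaneously destroys this coincidence on $\partial H$, so the perturbed competitor is no longer ordered on the lateral boundary and the comparison region must be handled with more care than the sketch suggests. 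These are fixable, but they are exactly where the ``bit of work'' in \cite{FKdrops} lives.
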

Any initial data satisfying the assumptions of the above result will preserve the John domain property assumption~\eqref{c} globally in time, it is an easy consequence of strong star-shapedness. We remark that with this regularity property the exponential rate for convergence in measure from Theorem~\ref{thm: dynamic stability} can be upgraded to convergence in Hausdorff distance and convergence of the energy.

The next component is the local regularity from assumptions~\eqref{a} and \eqref{c}.  In analogy to the results of Choi, Jerison and Kim \cite{Choi:2009aa,CJK} on the Hele-Shaw flow we expect that initial data with small Lipschitz constant will be smooth at positive times.  Thus for initial data $\Omega_0$ with $\rho$-reflection $\rho>0$ sufficiently small depending on the dimension, we would expect the free boundary to be globally $C^{1,\alpha}$ in $x,t$.

\appendix
\section{Computations}\label{sec: computations}
We record here several useful computations related to the minimal energy shape.  For a ball of radius $r$ the droplet height profile is given by,
$$u(x) = \tfrac{\lambda(B_r)}{2N}(r^2 - |x|^2).$$
To enforce the volume constraint we require
\begin{align*}
\textup{Vol} &= \int_{B_r}\tfrac{\lambda(B_r)}{2N}(r^2 - |x|^2) dx \\
& = \tfrac{\lambda(B_r)}{2N}\omega_N(1-\tfrac{N}{N+2})r^{N+2} \\
& = \tfrac{\lambda(B_r)}{N(N+2)}\omega_Nr^{N+2}.
\end{align*}
This determines the Lagrange multiplier,
\[ \lambda(B_r) = \frac{N(N+2)}{\omega_Nr^{N+2}}\textup{Vol}.\]
From this we can compute,
\[ \frac{d^2}{dr^2} \mathcal{J}(B_r) = \frac{d^2}{dr^2}(\lambda(B_r)\textup{Vol} + |B_r|) = \frac{N(N+2)^2(N+3)}{\omega_N}\frac{\textup{Vol}}{r^{N+4}} + N(N-1)\omega_N r^{N-2},\]
From which we have,
\[ \frac{d^2}{dr^2} \mathcal{J}(B_r) \geq N(N-1)\omega_N r^{N-2}\]
which we use for the stability estimate.

\medskip

We continue with computing $r_*$.  In order that $|Du| = 1$ on $\partial B_{r_*}$ we must have,
\[ r_* = N/\lambda(B_{r_*}) = \frac{\omega_Nr_*^{N+2}}{(N+2)}\textup{Vol}^{-1},\]
which determines the optimal radius,
\[ r_*^{N+1}  = \omega_N^{-1}(N+2)\textup{Vol}\]
Also we see,
\[\lambda(B_{r_*}) = N\left(\tfrac{\omega_N}{N+2}\right)^{\frac{1}{N+1}}\textup{Vol}^{-\frac{1}{N+1}}.\]
From here we can calculate the minimal energy,
\begin{align*}
 \mathcal{J}(B_{r_*}) &= \lambda(B_{r_*})\textup{Vol}+\omega_Nr_*^N \\
 &= N\left(\tfrac{\omega_N}{N+2}\right)^{\frac{1}{N+1}}\textup{Vol}^{\frac{N}{N+1}}+ \omega_N\left(\tfrac{N+2}{\omega_N}\right)^{\frac{N}{N+1}}\textup{Vol}^{\frac{N}{N+1}} \\
 & = \omega_N^{\frac{1}{N+1}}(N+2)^{-\frac{1}{N+1}}\left(2N+1\right)\textup{Vol}^{\frac{N}{N+1}}
 \end{align*}

\bibliographystyle{plain}
\bibliography{DropletRatesArticles}

\begin{thebibliography}{10}

\bibitem{Aftalion:1999aa}
Amandine Aftalion, J{\'e}r\^ome Busca, and Wolfgang Reichel.
\newblock Approximate radial symmetry for overdetermined boundary value
  problems.
\newblock {\em Adv. Differential Equations}, 4(6):907--932, 1999.

\bibitem{Alexandrov}
A.D. Alexandrov.
\newblock A characteristic property of spheres.
\newblock {\em Annali di Matematica Pura ed Applicata}, 58(1):303--315, 1962.

\bibitem{BNST08}
B~Brandolini, C~Nitsch, P~Salani, and C~Trombetti.
\newblock On the stability of the {S}errin problem.
\newblock {\em J. Differential Equations}, 245(6):1566--1583, 2008.

\bibitem{BNST-alt}
B.~Brandolini, C.~Nitsch, P.~Salani, and C.~Trombetti.
\newblock Serrin-type overdetermined problems: an alternative proof.
\newblock {\em Arch. Ration. Mech. Anal.}, 190(2):267--280, 2008.

\bibitem{BNST09}
B.~Brandolini, C.~Nitsch, P.~Salani, and C.~Trombetti.
\newblock Stability of radial symmetry for a {M}onge-{A}mp{\`e}re
  overdetermined problem.
\newblock {\em Ann. Mat. Pura Appl. (4)}, 188(3):445--453, 2009.

\bibitem{BNST0}
Barbara Brandolini, Carlo Nitsch, Paolo Salani, and Cristina Trombetti.
\newblock A note on the {S}errin problem in the plane.
\newblock {\em Matematiche (Catania)}, 63(2):83--92 (2009), 2008.

\bibitem{Brasco:2015aa}
Lorenzo Brasco, Guido De~Philippis, and Bozhidar Velichkov.
\newblock Faber-{K}rahn inequalities in sharp quantitative form.
\newblock {\em Duke Math. J.}, 164(9):1777--1831, 2015.

\bibitem{CJK}
Sunhi Choi, David Jerison, and Inwon Kim.
\newblock Regularity for the one-phase {H}ele-{S}haw problem from a {L}ipschitz
  initial surface.
\newblock {\em Amer. J. Math.}, 129(2):527--582, 2007.

\bibitem{Choi:2009aa}
Sunhi Choi, David Jerison, and Inwon Kim.
\newblock Local regularization of the one-phase {H}ele-{S}haw flow.
\newblock {\em Indiana Univ. Math. J.}, 58(6):2765--2804, 2009.

\bibitem{Ciraolo:2017aa}
Giulio Ciraolo and Francesco Maggi.
\newblock On the shape of compact hypersurfaces with almost-constant mean
  curvature.
\newblock {\em Comm. Pure Appl. Math.}, 70(4):665--716, 2017.

\bibitem{Ciraolo:2014aa}
Giulio Ciraolo and Rolando Magnanini.
\newblock A note on {S}errin's overdetermined problem.
\newblock {\em Kodai Math. J.}, 37(3):728--736, 2014.

\bibitem{Ciraolo:2016aa}
Giulio Ciraolo, Rolando Magnanini, and Vincenzo Vespri.
\newblock H{\"o}lder stability for {S}errin's overdetermined problem.
\newblock {\em Ann. Mat. Pura Appl. (4)}, 195(4):1333--1345, 2016.

\bibitem{Escher:2015aa}
Joachim Escher and Patrick Guidotti.
\newblock Local well-posedness for a quasi-stationary droplet model.
\newblock {\em Calc. Var. Partial Differential Equations}, 54(1):1147--1160,
  2015.

\bibitem{FKdrops}
William~M. Feldman and Inwon~C. Kim.
\newblock Dynamic stability of equilibrium capillary drops.
\newblock {\em Archive for Rational Mechanics and Analysis}, pages 1--60, 2013.

\bibitem{Figalli:2010aa}
A.~Figalli, F.~Maggi, and A.~Pratelli.
\newblock A mass transportation approach to quantitative isoperimetric
  inequalities.
\newblock {\em Invent. Math.}, 182(1):167--211, 2010.

\bibitem{GlasnerKim09}
K.~Glasner and I.~C. Kim.
\newblock Viscosity solutions for a model of contact line motion.
\newblock {\em Interfaces Free Bound.}, 11(1):37--60, 2009.

\bibitem{MR2144627}
K.~B. Glasner.
\newblock A boundary integral formulation of quasi-steady fluid wetting.
\newblock {\em J. Comput. Phys.}, 207(2):529--541, 2005.

\bibitem{MR2221703}
K.~B. Glasner.
\newblock Variational models for moving contact lines and the quasi-static
  approximation.
\newblock {\em European J. Appl. Math.}, 16(6):713--740, 2005.

\bibitem{glasner}
KB~Glasner.
\newblock A boundary integral formulation of quasi-steady fluid wetting.
\newblock {\em Journal of Computational Physics}, 207(2):529--541, 2005.

\bibitem{Greenspan:1976aa}
H.~P. Greenspan.
\newblock On the deformation of a viscous droplet caused by variable surface
  tension.
\newblock {\em Studies in Appl. Math.}, 57(1):45--58, 1976/77.

\bibitem{greenspan}
HP~Greenspan.
\newblock On the motion of a small viscous droplet that wets a surface.
\newblock {\em J. Fluid Mech}, 84(1):125--143, 1978.

\bibitem{GrunewaldKim11}
N.~Grunewald and I.~Kim.
\newblock A variational approach to a quasi-static droplet model.
\newblock {\em Calculus of Variations and Partial Differential Equations},
  41:1--19, 2011.

\bibitem{Guidotti:2015aa}
Patrick Guidotti.
\newblock Equilibria and their stability for a viscous droplet model.
\newblock {\em Nonlinearity}, 28(9):3175--3191, 2015.

\bibitem{Hurri-Syrjanen:1994aa}
Ritva Hurri-Syrj{\"a}nen.
\newblock An improved {P}oincar{\'e} inequality.
\newblock {\em Proc. Amer. Math. Soc.}, 120(1):213--222, 1994.

\bibitem{MR3627438}
B.~Krummel and F.~Maggi.
\newblock Isoperimetry with upper mean curvature bounds and sharp stability
  estimates.
\newblock {\em Calc. Var. Partial Differential Equations}, 56(2):Paper No. 53,
  43, 2017.

\bibitem{Magnanini:2016aa}
Rolando Magnanini and Giorgio Poggesi.
\newblock On the stability for alexandrov's soap bubble theorem.
\newblock {\em to appear in Journal d'Analyse Math{\'e}matiques}, 2016.

\bibitem{MR3268920}
A.~Mellet.
\newblock The thin film equation with non-zero contact angle: a singular
  perturbation approach.
\newblock {\em Comm. Partial Differential Equations}, 40(1):1--39, 2015.

\bibitem{reilly1974}
Robert~C. Reilly.
\newblock On the hessian of a function and the curvatures of its graph.
\newblock {\em Michigan Math. J.}, 20(4):373--383, 04 1974.

\bibitem{MR0474149}
Robert~C. Reilly.
\newblock Applications of the {H}essian operator in a {R}iemannian manifold.
\newblock {\em Indiana Univ. Math. J.}, 26(3):459--472, 1977.

\bibitem{MR996826}
Antonio Ros.
\newblock Compact hypersurfaces with constant higher order mean curvatures.
\newblock {\em Rev. Mat. Iberoamericana}, 3(3-4):447--453, 1987.

\bibitem{Serrin71}
J.~Serrin.
\newblock A symmetry problem in potential theory.
\newblock {\em Archive for Rational Mechanics and Analysis}, 43:304--318, 1971.

\bibitem{Wang:2009aa}
Xu-Jia Wang.
\newblock The {$k$}-{H}essian equation.
\newblock In {\em Geometric analysis and {PDE}s}, volume 1977 of {\em Lecture
  Notes in Math.}, pages 177--252. Springer, Dordrecht, 2009.

\bibitem{Weinberger:1971aa}
H.~F. Weinberger.
\newblock Remark on the preceding paper of {S}errin.
\newblock {\em Arch. Rational Mech. Anal.}, 43:319--320, 1971.

\end{thebibliography}

\end{document}